\newcommand{\N}{{\mathbb N}}
\newcommand{\R}{{\mathbb R}}
\newcommand{\C}{{\mathbb C}}
\newcommand{\Cnn}{\C^{n\times n}}
\newcommand{\Cn}{\C^n}
\newcommand{\mHC}{{m_{HC}}}
\newcommand{\Hn}{\mathbb{H}_n}
\newcommand{\PDHn}{\mathbb{H}_n^>}
\newcommand{\tc}{\widetilde{c}}
\newcommand{\SFI}{r}
\newcommand{\mA}{\mathbf{L}}
\newcommand{\mAH}{\mathbf{L}_H}
\newcommand{\mAS}{\mathbf{L}_S}
\newcommand{\mC}{\mathbf{C}}
\newcommand{\mCH}{\mathbf{C}_H}
\newcommand{\mH}{\mathbf{H}}
\newcommand{\mI}{\mathbf{I}}
\newcommand{\mJ}{\mathbf{J}}
\newcommand{\mP}{\mathbf{P}}
\newcommand{\mR}{\mathbf{R}}
\newcommand{\mU}{\mathbf{U}}
\newcommand{\mV}{\mathbf{V}}
\newcommand{\mW}{\mathbf{W}}
\newcommand{\hA}{\widehat{\mA}}
\newcommand{\dd}[1][x]{\,\operatorname{d}\!#1}
\newcommand{\dt}{\,\dd[t]}
\newcommand{\cL}{\mathcal{L}}
\newcommand{\cLAS}{\mathcal{L}_{AS}} 
\newcommand{\cLSD}{\mathcal{L}_{SD}} 
\newcommand{\cLSHscalar}{\mathcal{L}_{IM}} 
\newcommand{\setL}{\mathcal{L}}
\newcommand{\bigO}{\mathcal{O}}
\DeclareMathOperator{\diag}{diag}
\begin{document}

\title{Necessary and sufficient conditions for strong stability of explicit Runge--Kutta methods}
\titlerunning{Strong stability of explicit Runge--Kutta methods}
\author{Franz Achleitner \and Anton Arnold \and Ansgar J\"ungel} 

\institute{Franz Achleitner \at TU Wien, Institute of Analysis and Scientific Computing, Wiedner Hauptstr. 8-10, A-1040 Wien,  Austria, \email{franz.achleitner@tuwien.ac.at}
\and Anton Arnold \at TU Wien, Institute of Analysis and Scientific Computing, Wiedner Hauptstr. 8-10, A-1040 Wien,  Austria, \email{anton.arnold@tuwien.ac.at}
\and Ansgar J\"ungel \at TU Wien, Institute of Analysis and Scientific Computing, Wiedner Hauptstr. 8-10, A-1040 Wien,  Austria, \email{ansgar.juengel@tuwien.ac.at}}
%
%

\maketitle

\abstract{Strong stability is a property of time integration schemes for ODEs that preserve temporal monotonicity of solutions in arbitrary (inner product) norms.
It is proved that explicit Runge--Kutta schemes of order $p\in 4\N$ with $s=p$ stages for linear autonomous ODE systems are not strongly stable, closing an open stability question from [Z.~Sun and C.-W.~Shu, SIAM J. Numer. Anal. 57 (2019), 1158--1182]. Furthermore, for explicit Runge--Kutta methods of order $p\in\N$ and $s>p$ stages, we prove several sufficient as well as necessary conditions for strong stability. These conditions involve both the stability function and the hypocoercivity index of the ODE system matrix. This index is a structural property combining the Hermitian and skew-Hermitian part of the system matrix.
}



\keywords{Strong stability, linear ordinary differential equations, hypocoercivity (index), stability function; MSC 65L06, 65L20.}




\section{Introduction}

Explicit Runge--Kutta methods are standard tools in the numerical solution of ordinary differential equations and semi-discrete approximations of partial differential equations. Of particular interest are strongly stable schemes for which the norm of its numerical solution is nonincreasing in time, guaranteeing that the numerical error in each time step is not amplified during time integration. Surprisingly, the characterization of strongly stable explicit schemes is not complete. For instance, while it is well-known \cite{SunShu19} that explicit Runge--Kutta methods for linear systems of order $p$ are \emph{not} strongly stable if $p\in 4\N_0+1$ or $p\in 4\N_0+2$ and that they are strongly stable for $p\in 4\N_0+3$, the case $p\in 4\N$ is still open. 

In this work, we show that the proof of strong stability can be reduced to testing strong stability with respect to (w.r.t.) the set of asymptotically stable, semi-dissipative matrices (see below for the definitions) and w.r.t.\ the set of purely imaginary scalars. We deduce that explicit Runge--Kutta methods for linear systems with order $p\in 4\N$ and $p=s$ stages are \emph{not} strongly stable, thus filling the gap left open in \cite[Theorem 4.2]{SunShu19}. Furthermore, we prove that strong stability w.r.t.\ asymptotically stable, semi-dissipative matrices holds if the so-called hypocoercivity index of the system matrix is small enough.

We consider linear time-invariant systems of ordinary differential equations,
\begin{equation}\label{ODE:A}
  \frac{\dd[u]}{\dt} = \mA u, \quad t>0, \quad u(0) = u^0\in\C^n,
\end{equation}
with Lyapunov stable matrices~$\mA\in\Cnn$; see \cite[Definition 15.9.1]{Be18}, \cite[p.172]{HadChe08}. In this paper, we use several notions of stability for matrices $\mA\in\Cnn$:
\begin{itemize}
\item $\mA$ is \emph{Lyapunov stable} if all eigenvalues have nonpositive real part and those eigenvalues with vanishing real part are non-defective (i.e., the algebraic and geometric multiplicities coincide) \cite[Definition 15.9.1]{Be18}.
\item $\mA$ is \emph{asymptotically stable} if all eigenvalues have negative real part \cite[Definition 15.9.1]{Be18}.
\item $\mA$ is \emph{dissipative} (resp.~\emph{semi-dissipative}) if its Hermitian part $(\mA+\mA^*)/2$ is negative definite (resp.~negative semi-definite) \cite[Definition 4.1.1]{Be18}.
\end{itemize}
Dissipative matrices are asymptotically stable, and semi-dissipative matrices are Lyapunov stable. Lyapunov stability characterizes all system matrices~$\mA\in\Cnn$ such that $u\equiv 0$ is a Lyapunov stable solution to \eqref{ODE:A}. 

We use the following notation:
The conjugate transpose of a matrix $\mA\in\Cnn$ is denoted by $\mA^*$.
Matrices~$\mA\in\Cnn$ have a unique decomposition $\mA=\mAH+\mAS$ into its Hermitian part~$\mAH:=(\mA+\mA^*)/2$ and skew-Hermitian part~$\mAS:=(\mA-\mA^*)/2$.
We write $\Hn$ for the set of all Hermitian matrices in $\Cnn$. 
Positive definiteness (resp.~semi-definiteness) of Hermitian matrices~$\mP\in\Hn$ is denoted by $\mP>0$ (resp.~$\mP\geq 0$), and
$\PDHn:=\{\mH\in\Hn: \mH>0\}$ is the set of positive definite Hermitian matrices in $\Cnn$. Moreover, $\langle\cdot,\cdot\rangle$ is the standard inner product in $\Cn$ with norm $\|\cdot\|=\sqrt{\langle\cdot,\cdot\rangle}$, and $\|\cdot\|_2$ denotes the spectral norm in $\Cnn$.

It is known that
a matrix~$\mA\in\Cnn$ is Lyapunov stable if and only if there exists a positive definite Hermitian matrix $\mP\in\PDHn$ such that  
\begin{equation} \label{ineq:Lyapunov}
 \mA^* \mP +\mP\mA\leq 0;
\end{equation}  
see, e.g., \cite[Corollary to Theorem 5]{Str75}, \cite[Theorem 3.18]{HadChe08}. 
Under this condition, the solution $u(t)$ of~\eqref{ODE:A} is nonincreasing in the norm $\|\cdot\|_\mP :=\sqrt{\langle\cdot,\mP\cdot\rangle}$:
\begin{equation} \label{ineq.monotonicity}
  \frac{\dd[]}{\dt}\|u(t)\|_\mP^2
= \frac{\dd[]}{\dt}\langle u,\mP u\rangle
= \langle \mA u,\mP u\rangle +\langle u,\mP\mA u\rangle 
= \langle u,(\mA^*\mP+\mP\mA)u\rangle 
\le 0.
\end{equation}
Often, the aim of numerical schemes is to reproduce this property on the discrete level, i.e.\ to show that a certain norm of an approximation $u^k$ of $u(k\tau)$, where $\tau>0$ is the uniform time step, is nonincreasing in $k$. For this, we consider the explicit Runge--Kutta methods
\begin{equation}\label{1.RK}
  u^k 
= u^{k-1} + \tau\sum_{i=1}^s b_i K_i^k, \quad
  K_i^k
= \mA\bigg(u^{k-1} + \tau\sum_{j=1}^{i-1}a_{ij}K_j^k\bigg), \quad
	i=1,\ldots,s,
\end{equation}
where $b_i\in\C$ are the weights, $a_{ij}\in\C$ are the coefficients of the Runge--Kutta matrix, and $s\in\N$ is the number of stages.
Stability of explicit Runge--Kutta schemes can be expected only under a restriction on the time step. 
In this article, we give necessary and sufficient conditions such that explicit Runge--Kutta methods preserve the monotonicity property~\eqref{ineq.monotonicity} in the following sense.

\begin{definition}[Strongly stable]\label{def.stab}
\begin{enumerate}
    \item[(a)]
The Runge--Kutta scheme \eqref{1.RK} is {\em strongly stable} if for all matrix dimensions $n\in\N$, for all Lyapunov stable matrices $\mA\in\Cnn$, and for all $\mP\in\PDHn$ such that~\eqref{ineq:Lyapunov} holds, the numerical solution to \eqref{ODE:A} satisfies $\|u^1\|_\mP\le\|u^0\|_\mP$ for all initial data $u^0\in\C^n$ and sufficiently small time steps.

    \item[(b)]
For practical reasons, we call the strongly stable Runge--Kutta scheme \eqref{1.RK} 
\emph{strongly stable w.r.t. the set~$\setL$ of Lyapunov stable matrices} of any matrix dimension $n\in\N$,
\begin{equation*} 
 \setL :=\bigcup_{n\in\N}\{ \mA\in\Cnn: \exists \mP\in\PDHn\text{ such that~\eqref{ineq:Lyapunov} holds}\}.
\end{equation*}

    \item[(c)]
Similarly, the Runge--Kutta scheme \eqref{1.RK} is called \emph{strongly stable w.r.t.~a subset $\setL_0$ of~$\setL$} if the above property holds for all 
$\mA\in\setL_0$. 
\end{enumerate}
\end{definition}
\begin{remark}\label{rem:stab}
\begin{enumerate}[(a)]
\item
For explicit Runge--Kutta methods, strong stability w.r.t.~$\cL$ is equivalent to strong stability w.r.t. \emph{semi-dissipative} matrices~$\mA\in\cLSD$ with
$$ 
  \cLSD := \bigcup_{n\in\N}\{\mA\in\Cnn: \mA+\mA^*\leq 0\} 
  \subset \setL; 
$$
see the first step in the proof of Theorem~\ref{thm.stab} below.
\item
Strong stability of explicit Runge--Kutta schemes w.r.t.~$\cL$ 
(as studied here and in~\cite[\S4]{Tad02}, \cite{SunShu19}) is ``more restrictive'' than strong stability w.r.t.~(uniformly) coercive system matrices $\mA$ as studied in~\cite{LeTa98}, \cite[\S3]{Tad02}.
\end{enumerate}
\end{remark}

The goal of this paper is twofold: 
On the one hand, we give novel necessary conditions for strong stability of explicit Runge--Kutta schemes.
On the other hand, we derive sufficient conditions for strong stability in a weaker sense:
In contrast to Definition~\ref{def.stab}, we then require the monotonicity property only in {\em some} weighted norm $\|\cdot\|_\mP$ with~$\mP\in\PDHn$.

\begin{definition}[Strong stability in weak form]
\begin{enumerate}
    \item[(a)]
The Runge--Kutta scheme~\eqref{1.RK} is {\em strongly stable in weak form} if for all $n\in\N$ and for all Lyapunov stable matrices~$\mA\in\Cnn$, there exists a matrix~$\mP\in\PDHn$ satisfying \eqref{ineq:Lyapunov}, 
such that the numerical solution~\eqref{1.RK} to~\eqref{ODE:A} satisfies $\|u^1\|_\mP \le\|u^0\|_\mP$ for all initial data $u^0\in\C^n$ and sufficiently small time steps. 
    \item[(b)]
Similarly, the Runge--Kutta scheme \eqref{1.RK} is called \emph{strongly stable in weak form w.r.t.~a subset $\setL_0$ of~$\setL$} if the above property holds for all $\mA\in\setL_0$.
\end{enumerate}
\end{definition}

This notion of \emph{strong stability in weak sense} is inspired by an example in \cite[\S3.5]{LeTa98}; see Example \ref{ex:Levy.Tadmor}.

Most of the practical interest in strongly stable  Runge--Kutta methods has been for explicit schemes, e.g. in the integration of hyperbolic conservation laws \cite{Shu88,ShuOsh88}.
For implicit or implicit-explicit strongly stable Runge--Kutta methods, we refer to, e.g.,  \cite{CGGS17,FeSp04,GST01,Hig04,SunWeiWu22}. 
Furthermore, nonlinear problems are considered in, e.g., \cite{Ran21,RaKe20}. 
Note that a Runge--Kutta method of linear order $p$ (i.e., the order for linear systems) may possess a lower order when applied to nonlinear systems. 

Strong stability of the (implicit) midpoint rule can be seen as a corollary to Remark~\ref{rem:stab}(a) and \cite[Lemma 47(ii)]{AAM23}.

\bigskip
Let us summarize the main results of this paper:
\begin{itemize}
    \item 
An explicit Runge-Kutta scheme is strongly stable if and only if it is strongly stable w.r.t.~the set~$\cLAS$ of asymptotically stable, semi-dissipative matrices and w.r.t.~the set~$\cLSHscalar$ of purely imaginary scalars, see Theorem~\ref{thm.stab}.
    \item 
Explicit Runge--Kutta schemes 
of order $p\in 4\N$ and $s=p$ stages are {\em not} strongly stable. 
While these Runge--Kutta schemes are strongly stable w.r.t.~$\cLSHscalar$, they fail to be strongly stable w.r.t.~$\cLAS$: see Corollary~\ref{cor:ERK_p4N}. 
For this analysis, we derive novel necessary conditions for strong stability of explicit Runge--Kutta schemes using the precise short-time asymptotics for the spectral norm of the matrix exponential as derived in~\cite[Theorem 2.7]{AAC22}.
    \item 
Concerning strong stability w.r.t.~$\cLAS$, we show a remarkable connection between the order $p\in\N$ of the Runge--Kutta scheme and the hypocoercivity (HC) index of asymptotically stable, semi-dissipative matrices; see Theorem \ref{thm.main}. Roughly speaking, the HC-index of a matrix~$\mA$ describes the structural complexity of the intertwining of the Hermitian part~$\mAH$ and skew-Hermitian part~$\mAS$; see Definition \ref{def:HCI}.  
    \item 
Finally, we show that each explicit Runge--Kutta method of order $p\in\N$ is strongly stable in weak form if and only if it is 
locally stable on the imaginary axis as defined in~\cite[Definition 2.1]{KrSc92}; see Theorem~\ref{thm.StrongStabilityInWeakForm}.    
\end{itemize}


\section{Main results}
\label{sec:main_results}

Before stating our main theorems, we recall that the Runge--Kutta scheme~\eqref{1.RK} for ODE systems~\eqref{ODE:A} can be written as $u^k=R(\tau\mA)u^{k-1}$, where $R(z)$ is the stability function \cite[Definition 2.1]{HaWa96}. 
For linear time-invariant systems, $s=p$ stages are sufficient to obtain a scheme of order $p$~\cite[\S~IV.2]{HaWa96}.
Generally, if an explicit Runge--Kutta method is of order $p$, its stability function can be written as
\begin{equation}\label{1.R}
 R(z) 
=\sum_{j=0}^p\frac{z^j}{j!} + \sum_{j=p+1}^s c_j\frac{z^j}{j!}, 
\quad z\in\C,\ c_{p+1}\neq 1 ,
\end{equation}
with some constants $c_{p+1},\ldots,c_s\in\R$ \cite[Theorem IV.2.2]{HaWa96} and $c_j=0$ for $j\geq s+1$.
For an explicit Runge--Kutta scheme with $s=p$, this expression reduces to the truncated exponential $R(z)=\sum_{j=0}^p z^j/j!$; see, e.g., \cite[(2.12)]{HaWa96}.

Our first main result shows that, to establish strong stability for an explicit Runge--Kutta scheme, it is (necessary and) sufficient to prove strong stability w.r.t.~two distinct subsets of Lyapunov stable matrices.

\begin{theorem}\label{thm.stab}
Consider an explicit Runge--Kutta method with stability function~\eqref{1.RK}.
Then the Runge--Kutta scheme is strongly stable if and only if it is strongly stable w.r.t. 
\begin{enumerate}[(a)]
 \item 
the set~$\cLAS$ of asymptotically stable, semi-dissipative matrices, and 
 \item 
the set~$\cLSHscalar$ of purely imaginary scalars $\mA\in \I\R\subset\C$. 
\end{enumerate}
\end{theorem}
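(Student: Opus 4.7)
The ``only if'' direction is immediate, since $\cLAS \subset \cL$ and $\cLSHscalar \subset \cL$ (the scalar $i\omega$ has its only eigenvalue on the imaginary axis and is trivially non-defective). For the ``if'' direction, I would first invoke the reduction mentioned in Remark~\ref{rem:stab}(a): via the change of variables $\tilde\mA = \mP^{1/2}\mA\mP^{-1/2}$, strong stability w.r.t.~$\cL$ is equivalent to $\|R(\tau\mA)\|_2\leq 1$ for all $\mA\in\cLSD$ and sufficiently small $\tau$. So it suffices to show that strong stability w.r.t.\ $\cLAS$ and $\cLSHscalar$ implies $\|R(\tau\mA)\|_2\leq 1$ for every semi-dissipative $\mA$.

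The main structural step is to split off the purely imaginary spectrum orthogonally. Fix $\mA\in\cLSD$ and let $V_0$ be the sum of the eigenspaces corresponding to the (finitely many) purely imaginary eigenvalues $i\omega_1,\ldots,i\omega_m$ of $\mA$; these eigenvalues are non-defective by Lyapunov stability. For any eigenvector $v$ with $\mA v=i\omega v$ one has
\begin{equation*}
\langle v,(\mA+\mA^*)v\rangle = i\omega\|v\|^2 + \overline{i\omega}\|v\|^2 = 0,
\end{equation*}
and because $-(\mA+\mA^*)\ge 0$, this forces $(\mA+\mA^*)v=0$, i.e.\ $\mA^*v=-i\omega v$. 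Consequently $V_0$ is invariant under both $\mA$ and $\mA^*$, and hence so is $V_0^\perp$. This yields an orthogonal decomposition $\C^n=V_0\oplus V_0^\perp$ with $\mA=\mA_0\oplus\mA_-$, where $\mA_0:=\mA|_{V_0}$ satisfies $\mA_0=-\mA_0^*$ (skew-Hermitian) and $\mA_-:=\mA|_{V_0^\perp}$ inherits semi-dissipativity from $\mA$ and has strictly negative spectrum by construction, hence $\mA_-\in\cLAS$.

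Since the decomposition is orthogonal, $R(\tau\mA)=R(\tau\mA_0)\oplus R(\tau\mA_-)$ in the spectral norm, so
\begin{equation*}
\|R(\tau\mA)\|_2 = \max\bigl(\|R(\tau\mA_0)\|_2,\,\|R(\tau\mA_-)\|_2\bigr).
\end{equation*}
The second term is controlled by hypothesis (a), giving $\|R(\tau\mA_-)\|_2\le 1$ for all $\tau\le\tau_-$. For the first term, diagonalize $\mA_0=U\,\diag(i\omega_1,\ldots,i\omega_d)\,U^*$ with $U$ unitary; then $\|R(\tau\mA_0)\|_2=\max_j|R(i\tau\omega_j)|$, and hypothesis (b) applied to each purely imaginary scalar $i\omega_j$ yields a threshold $\tau_j$ with $|R(i\tau\omega_j)|\le 1$ for $\tau\le\tau_j$. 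Choosing $\tau_0:=\min(\tau_-,\tau_1,\ldots,\tau_d)$ delivers $\|R(\tau\mA)\|_2\le 1$ for $0<\tau\le\tau_0$, which completes the proof.

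The step I expect to carry the argument is the orthogonal invariant decomposition $\C^n=V_0\oplus V_0^\perp$; everything else is a straightforward consequence of the block-diagonal structure, unitary diagonalizability of the skew-Hermitian block, and the conjugation-reduction from $\cL$ to $\cLSD$. No additional hypothesis (such as a uniform time-step bound across the family) is required because the Definition~\ref{def.stab} formulation of strong stability allows the threshold to depend on $\mA$.
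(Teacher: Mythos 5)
Your proof is correct, and its overall architecture --- reduce $\cL$ to $\cLSD$ via the $\mP^{1/2}$-conjugation of Lemma~\ref{lem.sim}, split a semi-dissipative matrix orthogonally into an asymptotically stable part and a skew-Hermitian part, then unitarily diagonalize the skew-Hermitian part to reach purely imaginary scalars --- matches the paper's four reduction steps. Where you genuinely diverge is in how the splitting is produced. The paper derives the block-diagonal form of Lemma~\ref{lem:blockDiagonalForm} from the staircase form of the pair $(\mAS,\mAH)$ (Lemma~\ref{lem:SF}, quoted from the literature) and then needs Proposition~\ref{prop:border} together with a full-rank argument on the subdiagonal blocks to see that the leading block is asymptotically stable. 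You instead take $V_0$ to be the span of the eigenvectors of the purely imaginary eigenvalues and use the elementary observation that $\langle v,(\mA+\mA^*)v\rangle=0$ combined with $\mA+\mA^*\le 0$ forces $(\mA+\mA^*)v=0$, hence $\mA^*v=-i\omega v$; this makes $V_0$ reduce $\mA$ orthogonally, with $\mA|_{V_0}$ skew-Hermitian and $\mA|_{V_0^\perp}$ asymptotically stable (a purely imaginary eigenvalue of the complementary block would yield an eigenvector of $\mA$ lying in $V_0\cap V_0^\perp=\{0\}$ --- an argument that does not even require the semi-simplicity you invoke). Your route is shorter and self-contained for the purposes of Theorem~\ref{thm.stab}; what the staircase form buys the paper is not needed here but later, namely the identification $\mHC(\mA)=\SFI-2$ of Remark~\ref{stair-HC}, which is the engine of Theorem~\ref{thm.main}. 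The only cosmetic slip is the phrase ``strictly negative spectrum'' for $\mA_-$, which should read ``spectrum in the open left half-plane''; it does not affect the validity of the argument.
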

This means that the Runge--Kutta scheme is strongly stable if and only if for all linear systems~\eqref{ODE:A} with~$\mA$ either in~$\cLAS$ or~$\cLSHscalar$, for sufficiently small time steps, the numerical solution to~\eqref{ODE:A} satisfies 
$\|u^1\|\leq \|u^0\|$ for all initial data $u^0\in\Cn$. 

The (strong) stability of Runge--Kutta schemes for scalar differential equations is well studied; see, e.g., \cite{KrSc92}.
In particular, strong stability w.r.t.~$\cLSHscalar$ is equivalent to \emph{local stability on the imaginary axis} (i.e., there exists $Z>0$ such that $|R(z)|\le 1$ for all $z\in\I\R$ with $|z|\le Z$). For example, the Runge--Kutta method with $p=s=4$ is locally stable on the imaginary axis. 

Therefore, we shall focus here on strong stability w.r.t.~asymptotically stable, semi-dissipative matrices~$\mA\in\cLAS$.
We shall show that a Runge--Kutta method with $p=s=4$ fails to be strongly stable for certain asymptotically stable, semi-dissipative matrices $\mA\in\cLAS$. We also discuss the (counter-)examples given in~\cite[\S3.5]{LeTa98} and~\cite[Proposition 1.1]{SunShu17}.

Our second main result is concerned with strong stability w.r.t. asymptotically stable, semi-dissipative matrices~$\cLAS$.
We show a remarkable connection between the order $p\in\N$ of the Runge--Kutta scheme and the hypocoercivity index (HC-index) of asymptotically stable, semi-dissipative matrices $\mA\in\cLAS$, which is defined as follows; see \cite[Definition 3.1]{AAC22}, \cite[Definition 3]{AAM21}.
\begin{definition} \label{def:HCI}
Let $\mA\in\Cnn$ be semi-dis\-si\-pa\-tive.
The~\emph{hy\-po\-co\-er\-ci\-vi\-ty index (HC-index)~$m_{HC}=m_{HC}(\mA)$ of the (semi-dissipative) matrix~$\mA$} is defined as the smallest integer~$m\in\N_0$ (if it exists) such that
\begin{equation}\label{mHC-ineq}
  T_m :=
  \sum_{j=0}^m \mAS^j \mAH (\mAS^*)^j < 0.
\end{equation}
\end{definition}

\begin{remark}
Originally, the HC-index has been defined for accretive matrices $\mC\in\Cnn$ (i.e., matrices with positive semi-definite Hermitian part~$\mCH\geq 0$) in~\cite{AAC18}. 
For practical reasons, we present the hypocoercivity theory here for semi-dissipative matrices (using the fact that for a semi-dissipative matrix~$\mA\in\Cnn$, the matrix $\mC:=-\mA$ is accretive).
\end{remark}

By definition, a semi-dissipative matrix~$\mA$ is dissipative if and only if $m_{HC}(\mA)=0$.
We recall from \cite[Lemma 2.4]{AAC18} that every matrix $\mA\in\cLAS$ has a finite HC-index. For the next theorem, we introduce the following subsets of $\cLAS$:
$$
  \cLAS^{m}:=\{\mA\in\cLAS\: :\ \mHC(\mA)\le m \}, \quad m\in\N_0.
$$

\begin{theorem}\label{thm.main}
Consider explicit Runge--Kutta schemes of order $p\in \N$ for \eqref{ODE:A} with stability function~\eqref{1.R}.
Then the following results for strong stability w.r.t.~asymptotically stable, semi-dissipative matrices~$\mA\in\cLAS$ hold:
\begin{enumerate}[(a)]
 \item
All explicit Runge--Kutta schemes of order $p\in \N$ are strongly stable w.r.t.~$\cLAS^{m}$, if $m\in\N_0$ satisfies $2m+1\leq p$. 
 \item
An explicit Runge--Kutta scheme of order~$p\in\N$ is \emph{not} strongly stable w.r.t.~$\cLAS$ if the coefficient $c_{p+1}$ in \eqref{1.R} satisfies
\begin{subequations} \label{c}
\begin{align} 
  (-1)^{(p+1)/2}(1-c_{p+1}) < 0 & \quad\mbox{ for $p$  odd;} \label{c.odd} \\
\text{or}\qquad  
  1 +(-1)^{p/2} (c_{p+1} -1)\tbinom{p}{p/2} < 0 & \quad \mbox{ for $p$ even.} 
	\label{c.even}
\end{align}
\end{subequations}
\item
In case~\eqref{c} holds and the matrix $\mA\in\cLAS$ satisfies $2\mHC(\mA)+1>p$, the considered Runge--Kutta scheme is \emph{not} strongly stable w.r.t.~the singleton $\{\mA\}$. This means that it is strongly stable w.r.t.~$\cLAS^m$ but \emph{not} strongly stable w.r.t.~$\cLAS\setminus\cLAS^{m}$ for any $m\in\N$ with $2m+1\le p$.
\end{enumerate}
\end{theorem}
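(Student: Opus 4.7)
The plan is to prove part~(a) by a short-time comparison of $R(\tau\mathbf{A})$ with $e^{\tau\mathbf{A}}$ using the sharp asymptotic of~\cite[Theorem 2.7]{AAC22}, and to derive parts~(b) and~(c) jointly by establishing~(c) directly; (b) then follows because matrices $\mathbf{A}\in\cLAS$ with arbitrarily large hypocoercivity index exist (for instance, suitable perturbations of nilpotent skew-Hermitian blocks). Throughout, the key analytic input is
$$\|e^{\tau\mathbf{A}}\|_2 = 1 - \mu(\mathbf{A})\,\tau^{2\mHC(\mathbf{A})+1} + o(\tau^{2\mHC(\mathbf{A})+1}),\qquad \tau\to 0^+,$$
with $\mu(\mathbf{A})>0$.

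For part~(a), Remark~\ref{rem:stab}(a) reduces strong stability w.r.t.\ $\cLAS^{m}$ to the norm bound $\|R(\tau\mathbf{A})\|_2\le 1$ for sufficiently small $\tau>0$. Since the scheme has order $p$, $\|R(\tau\mathbf{A})-e^{\tau\mathbf{A}}\|_2 = O(\tau^{p+1})$, so the triangle inequality yields
$$\|R(\tau\mathbf{A})\|_2 \le 1 - \mu(\mathbf{A})\,\tau^{2\mHC(\mathbf{A})+1} + O(\tau^{p+1}).$$
The hypothesis $2m+1\le p$ together with $\mHC(\mathbf{A})\le m$ gives $p+1 > 2\mHC(\mathbf{A})+1$, so the negative leading term dominates for small~$\tau$.

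For parts~(b) and~(c), fix $\mathbf{A}\in\cLAS$ with $2\mHC(\mathbf{A})+1>p$ and a unit vector $u\in\C^n$. Writing
$R(\tau\mathbf{A}) = e^{\tau\mathbf{A}} + \tfrac{c_{p+1}-1}{(p+1)!}(\tau\mathbf{A})^{p+1} + O(\tau^{p+2})$
and expanding the squared norm,
$$\|R(\tau\mathbf{A})u\|^2 - 1 = \bigl(\|e^{\tau\mathbf{A}}u\|^2-1\bigr) + \frac{2(c_{p+1}-1)\,\tau^{p+1}}{(p+1)!}\,\mathrm{Re}\langle u,\mathbf{A}^{p+1}u\rangle + O(\tau^{p+2}).$$
The first bracket is $O(\tau^{2\mHC(\mathbf{A})+1}) = o(\tau^{p+1})$ by the same \cite[Theorem 2.7]{AAC22}, so the sign of the $\tau^{p+1}$-coefficient governs growth or decay. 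I would then exhibit a unit vector $u$ for which $(c_{p+1}-1)\,\mathrm{Re}\langle u,\mathbf{A}^{p+1}u\rangle>0$ under condition~\eqref{c}. Decomposing $\mathbf{A}=\mathbf{A}_H+\mathbf{A}_S$ and expanding $\mathbf{A}^{p+1}$ non-commutatively: for $p$ odd, the all-skew-Hermitian monomial $\mathbf{A}_S^{p+1}$ is Hermitian with spectrum of sign $i^{p+1}=(-1)^{(p+1)/2}$ (writing $\mathbf{A}_S=i\mathbf{B}$), so an eigenvector of $\mathbf{A}_S$ witnesses~\eqref{c.odd}; for $p$ even, $\mathbf{A}_S^{p+1}$ is skew-Hermitian and the leading Hermitian piece comes from the monomials $\mathbf{A}_S^j\mathbf{A}_H\mathbf{A}_S^{p-j}$, with the central position $j=p/2$ producing the binomial weight $\binom{p}{p/2}$ and sign $(-1)^{p/2}$ of~\eqref{c.even}. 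The test vector~$u$ is taken in the extremal subspace singled out by \cite[Theorem 2.7]{AAC22}, so that $\|e^{\tau\mathbf{A}}u\|^2-1$ remains genuinely subleading.

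The main obstacle is the fine combinatorial sign analysis for $p$ even: one must verify that the symmetrized contribution of the central insertion $\mathbf{A}_S^{p/2}\mathbf{A}_H\mathbf{A}_S^{p/2}$ carries exactly the binomial weight $\binom{p}{p/2}$ in the Hermitian part of $\mathbf{A}^{p+1}$, and that the extremal vector~$u$ from \cite[Theorem 2.7]{AAC22} can be aligned to realize this leading term with a definite sign. Reconciling the HC-index--adapted choice of~$u$ (which keeps $e^{\tau\mathbf{A}}u$ subleading) with the eigenvector structure of~$\mathbf{A}_S$ (needed to produce the claimed leading coefficient) is the core algebraic difficulty and would likely rely on a normal form for matrices of finite HC-index.
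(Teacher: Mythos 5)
Your part (a) is correct and essentially the paper's argument (the paper works with $G(\tau)=R(\tau\mA^*)R(\tau\mA)$ versus $Q(\tau)=e^{\tau\mA^*}e^{\tau\mA}$ rather than with the triangle inequality, but the content is the same). The genuine gap is in part (b), in two places, both concentrated in the even case. First, your claim that the exact-flow bracket $\|e^{\tau\mA}u\|^2-1$ is $o(\tau^{p+1})$ is false precisely in the case that matters: the counterexample matrix must be taken with $\mHC(\mA)=p/2$, so $2\mHC+1=p+1$ and the intrinsic decay of the exact flow competes with the order-$(p+1)$ defect of $R$ \emph{at the same order} $\tau^{p+1}$. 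This competition is the entire content of condition \eqref{c.even}: the term $\tbinom{p}{p/2}^{-1}$ there is the sharp decay constant $c_1(u_0)=\|(-\mAH)^{1/2}\mAS^{\mHC}u_0\|_2^2/\big((2\mHC+1)!\tbinom{2\mHC}{\mHC}\big)$ of the exact flow from \cite[Lemma A.4]{AAC22} (Lemma~\ref{lm:g}), \emph{not} a binomial weight of the central monomial in the noncommutative expansion of $\mA^{p+1}$: in that expansion every word, including $\mAS^{p/2}\mAH\mAS^{p/2}$, carries coefficient $1$, and after sandwiching with the structured vector $u_0$ of \eqref{u0} it is the only surviving word. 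If you treat the bracket as genuinely subleading you obtain the wrong criterion $(-1)^{p/2}(c_{p+1}-1)<0$ instead of \eqref{c.even}.

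Second, a fixed ($\tau$-independent) test vector cannot do the job, and in particular an eigenvector $u$ of $\mAS$ cannot: either $\mAH u\neq 0$, in which case $\|e^{\tau\mA}u\|^2-1=2\tau\,u^*\mAH u+\bigO(\tau^2)$ is negative already at order $\tau$ and swamps the $\tau^{p+1}$ perturbation, or $\mAH u=0$, in which case Proposition~\ref{prop:border} forces $\mA\notin\cLAS$. The tension you flag in your last paragraph is real, and the paper resolves it not with a normal form but with the $\tau$-dependent polynomial test vector $u_\tau=u_0+\sum_{\ell=1}^{\mHC}b_\ell\tau^\ell(-\mA)^\ell u_0$, $u_0\in\ker(T_{\mHC-1})$ (Remark~\ref{rem:u0} and Lemma~\ref{lm:g}); the corrections are exactly what make $u_\tau^*Q(\tau)u_\tau-\|u_\tau\|^2$ attain the \emph{sharp} coefficient $-2c_1(u_0)\tau^{2\mHC+1}$, without which the constant in \eqref{c.even} again comes out wrong. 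The odd case is easier ($\mHC=(p+1)/2$ gives $2\mHC+1=p+2$, so the bracket really is subleading and only the word $\mAS^{p+1}$ survives), but it still requires $u_0$ from \eqref{u0} rather than an eigenvector of $\mAS$. Your reduction of (b) to a statement like (c), using the existence of matrices of prescribed HC-index, is fine and matches the paper's treatment of part (c).
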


The conditions \eqref{c} on the coefficients of the stability functions directly yield the following result (which is also included in Table \ref{table:ERK} and illustrated in the examples in Section \ref{sec.stab}).

\begin{corollary}\label{cor:ERK_p4N}
Explicit Runge--Kutta schemes of order $p\in 4\N$ or~$p\in 4\N_0+1$ with $s=p$ stages (and hence with $c_{p+1}=0$) are \emph{not} strongly stable. 
\end{corollary}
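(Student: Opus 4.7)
The plan is to apply Theorem~\ref{thm.main}(b) directly. First, I would observe that when $s=p$ the stability function of an order-$p$ explicit Runge--Kutta scheme reduces to the truncated exponential $R(z)=\sum_{j=0}^{p} z^j/j!$, as stated just after \eqref{1.R}. In particular, $c_{p+1}=0$. So it only remains to verify that condition \eqref{c.odd} holds for $p\in 4\N_0+1$ and \eqref{c.even} holds for $p\in 4\N$.

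For the odd case $p=4k+1$ with $k\in\N_0$, one has $(p+1)/2=2k+1$, which is odd, so $(-1)^{(p+1)/2}=-1$. With $c_{p+1}=0$, the left-hand side of \eqref{c.odd} becomes $(-1)(1-0)=-1<0$, so \eqref{c.odd} holds. For the even case $p=4k$ with $k\in\N$, one has $p/2=2k$, which is even, so $(-1)^{p/2}=+1$. With $c_{p+1}=0$, the left-hand side of \eqref{c.even} becomes
\[
 1 + (0-1)\tbinom{p}{p/2} = 1-\tbinom{p}{p/2},
\]
and since $\binom{4k}{2k}\ge \binom{4}{2}=6>1$ for every $k\in\N$, this is strictly negative, so \eqref{c.even} holds.

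Thus in both cases the hypothesis of Theorem~\ref{thm.main}(b) is satisfied, which yields that the scheme is not strongly stable w.r.t.~$\cLAS$, and a fortiori not strongly stable (in the sense of Definition~\ref{def.stab}). There is no real obstacle here: the corollary is a direct substitution of $c_{p+1}=0$ into the two inequalities \eqref{c.odd} and \eqref{c.even}, together with the elementary binomial estimate $\binom{4k}{2k}>1$.
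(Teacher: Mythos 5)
Your proposal is correct and matches the paper's own (implicit) argument: the corollary is stated as following directly from the conditions \eqref{c} of Theorem~\ref{thm.main}(b), and your substitution of $c_{p+1}=0$ into \eqref{c.odd} for $p\in 4\N_0+1$ and into \eqref{c.even} for $p\in 4\N$, together with $\binom{4k}{2k}\ge 6>1$, is exactly the verification required. The concluding step---failure of strong stability w.r.t.\ the subset $\cLAS$ implies failure of strong stability---is also how the paper reads the result.
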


Next, we shall combine two sufficient criteria for explicit Runge--Kutta schemes to \emph{fail} strong stability: the criterion to fail strong stability w.r.t.~$\cLAS$ (given in Theorem~\ref{thm.main}(b)), and w.r.t.~$\cLSHscalar$ (given in Theorem~\ref{thm.LS} ---  and taken from the literature: \cite[Theorem 3.1]{KrSc92}).

\begin{theorem} \label{thm.ERK.notStronglyStable}
Explicit Runge--Kutta schemes of order $p\in \N$ for \eqref{ODE:A} 
are \emph{not} strongly stable if
\begin{subequations} \label{thm.ERK.notStronglyStable.c}
\begin{align} 
  (-1)^{(p+1)/2}(1-c_{p+1})<0, &\quad\mbox{for $p$ odd;} 
  \label{podd} \\
  \begin{rcases}
  (-1)^{p/2}\big(c_{p+2} - (p+2)c_{p+1} + (p+1)\big) < 0 \\
  \mbox{or }\quad 1 +(-1)^{p/2} (c_{p+1} -1)\tbinom{p}{p/2} < 0
  \end{rcases} &\quad\mbox{for $p$ even.}
  \label{thm.ERK.notStronglyStable.even}
\end{align}
\end{subequations}  
\end{theorem}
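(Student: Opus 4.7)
The strategy is to recognize this theorem as a direct assembly of three earlier results: Theorem~\ref{thm.stab} (the reduction), Theorem~\ref{thm.main}(b) (the $\cLAS$ criterion), and Theorem~\ref{thm.LS} (the scalar imaginary-axis criterion from \cite[Theorem 3.1]{KrSc92}). No new computation is needed beyond quoting these.

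First, I would invoke Theorem~\ref{thm.stab}: an explicit Runge--Kutta scheme is strongly stable if and only if it is strongly stable with respect to both $\cLAS$ and $\cLSHscalar$. Hence it suffices to exhibit failure on just one of these two subsets, and the task reduces to matching each sufficient condition in~\eqref{thm.ERK.notStronglyStable.c} to one of the two failure mechanisms.

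For $p$ odd, the single hypothesis~\eqref{podd} is literally condition~\eqref{c.odd} of Theorem~\ref{thm.main}(b). Applying that theorem produces failure of strong stability with respect to $\cLAS$, and by the above reduction this finishes the odd case. For $p$ even, the hypothesis is a disjunction of two inequalities, and I would treat them in parallel. The second inequality in~\eqref{thm.ERK.notStronglyStable.even} coincides with~\eqref{c.even} of Theorem~\ref{thm.main}(b); applying that theorem yields failure with respect to $\cLAS$. The first inequality, $(-1)^{p/2}\bigl(c_{p+2}-(p+2)c_{p+1}+(p+1)\bigr)<0$, is the hypothesis of Theorem~\ref{thm.LS}; that theorem (equivalently, a short Taylor expansion of $|R(\I y)|^2-1$ at $y=0$, whose lowest non-vanishing coefficient is, up to a positive factor, $-2(-1)^{p/2}(c_{p+2}-(p+2)c_{p+1}+(p+1))/(p+2)!$) delivers $|R(\I y)|>1$ on a punctured neighborhood of~$0$. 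Since strong stability with respect to $\cLSHscalar$ is equivalent to local stability on the imaginary axis, as remarked immediately after Theorem~\ref{thm.stab}, this produces failure with respect to $\cLSHscalar$, completing the even case.

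The only real obstacle is presentational rather than computational: one must make transparent that, in the even case, the disjunction (not conjunction) in~\eqref{thm.ERK.notStronglyStable.even} is the correct logical form, because the two alternatives invalidate different factors in the characterization of Theorem~\ref{thm.stab}---one via $\cLAS$, the other via $\cLSHscalar$---and either alone already destroys strong stability.
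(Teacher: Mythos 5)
Your proposal is correct and follows essentially the same route as the paper, which obtains Theorem~\ref{thm.ERK.notStronglyStable} precisely by combining the failure criterion w.r.t.~$\cLAS$ from Theorem~\ref{thm.main}(b) with the failure criterion w.r.t.~$\cLSHscalar$ from Theorem~\ref{thm.LS}, the disjunction in the even case reflecting that either mechanism alone suffices. (Only the trivial direction of Theorem~\ref{thm.stab} is actually needed here --- failure on a subset of $\cL$ already negates strong stability by Definition~\ref{def.stab}(c) --- but invoking the full equivalence does no harm.)
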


\begin{remark} 
The negation of Theorem~\ref{thm.ERK.notStronglyStable} represents necessary conditions for strong stability. As such it 
complements the sufficient condition for strongly stable, explicit Runge--Kutta schemes of even order $p\in 2\N$ given in~\cite[Theorem 4.5]{SunShu19}. More precisely, the negation of the first condition of \eqref{thm.ERK.notStronglyStable.even} coincides with the first condition in~\cite[Theorem 4.5]{SunShu19}, except of the (non-)strict relation sign. The negation of the second condition from \eqref{thm.ERK.notStronglyStable.even} and its analog from~\cite[Theorem 4.5]{SunShu19} both give a bound on $c_{p+1}$, but they leave a gap between the sufficient and necessary conditions. We notice that condition \eqref{podd} is sufficient and necessary in \cite{SunShu19}.
In Table~\ref{table:ERK}, we summarize the results for explicit Runge--Kutta schemes of order $p\in\N$ and $s=p$ stages.
\end{remark}

\begin{SCtable}[2.8]
\begin{tabular}{llll}
$p$ & $\cLAS$ & $\cLSHscalar$ & $\cL$ \\[1mm]
\hline\\[-3mm]
$4\N_0+1$ & No & No & No \\
$4\N_0+2$ & ? & No & No \\
$4\N_0+3$ & Yes & Yes & Yes \\
$4\N$ & \textbf{No} & Yes & \textbf{No} \\
\hline\\
\end{tabular}
\caption{shows whether or not explicit Runge--Kutta methods of order $p\in\N$ with $s=p$ stages are strongly stable w.r.t. asymptotically stable, semi-dissipative matrices~$\cLAS$, purely imaginary scalars~$\cLSHscalar$, and Lyapunov stable matrices~$\cL$, respectively; new results are in bold face. 
Theorem~\ref{thm.ERK.notStronglyStable} shows that explicit Runge--Kutta schemes for linear systems of order $p\in 4\N$ and $s=p$ stages are \textbf{\em not} strongly stable. This closes the open case in~\cite[Section 4.2]{SunShu19}.}
\label{table:ERK}
\end{SCtable}

Our third main result is concerned with sufficient conditions for strong stability in weak form. 

\begin{theorem} \label{thm.StrongStabilityInWeakForm}
An explicit Runge--Kutta scheme of order $p\in\N$ is strongly stable in weak form if and only if it is strongly stable w.r.t.~$\cLSHscalar$ (i.e., if and only if it is locally stable on the imaginary axis).
\end{theorem}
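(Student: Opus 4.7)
The plan is to prove the two implications by entirely different means.

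For the forward direction, I would restrict the hypothesis of strong stability in weak form to the one-dimensional case $n=1$, $\mA=\mathrm{i}\omega$ with $\omega\in\R$. Such an $\mA$ is Lyapunov stable, any positive scalar $\mP$ satisfies~\eqref{ineq:Lyapunov} trivially, and the contraction $\|u^1\|_\mP\le\|u^0\|_\mP$ reduces to $|R(\mathrm{i}\tau\omega)|\le 1$ for all sufficiently small $\tau>0$. Specializing to $\omega=1$ yields a threshold $\tau_0>0$ with $|R(\mathrm{i}y)|\le 1$ on $(0,\tau_0]$. Since the stability function~\eqref{1.R} has real coefficients, $|R(-\mathrm{i}y)|=|R(\mathrm{i}y)|$, and $|R(0)|=1$, so local stability on the imaginary axis follows with constant $Z=\tau_0$.

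For the backward direction, I would fix a Lyapunov stable $\mA\in\Cnn$ and exploit the fact (inherent to Lyapunov stability) that its imaginary-axis eigenvalues are non-defective. This lets me choose an invertible $\mV\in\Cnn$ with $\mV^{-1}\mA\mV=\mJ_0\oplus\mJ_-$, where $\mJ_0=\diag(\mathrm{i}\omega_1,\ldots,\mathrm{i}\omega_r)$ collects the imaginary eigenvalues as $1\times 1$ blocks and $\mJ_-$ is the Jordan part carrying the eigenvalues with strictly negative real part. The candidate weight is
\[
 \mP:=\mV^{-*}\bigl(\mI_r\oplus \mathbf{Q}_-\bigr)\mV^{-1},
\]
where $\mathbf{Q}_->0$ is the unique positive definite solution of the continuous Lyapunov equation $\mJ_-^*\mathbf{Q}_-+\mathbf{Q}_-\mJ_-=-\mI$, which exists because $\mJ_-$ is asymptotically stable. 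Inequality~\eqref{ineq:Lyapunov} then decouples block-wise into $\mJ_0^*+\mJ_0=0$ and $-\mI<0$. For the contraction, the polynomial structure gives $R(\tau\mJ)=R(\tau\mJ_0)\oplus R(\tau\mJ_-)$, and I would verify the two blocks separately: on the imaginary block, $R(\tau\mJ_0)=\diag(R(\mathrm{i}\tau\omega_k))$ is a contraction in the standard norm as soon as $\tau\,\max_k|\omega_k|\le Z$, by local stability of $R$ on the imaginary axis; on the stable block, the expansion $R(z)=1+z+\bigO(z^2)$, available for any scheme of order $p\ge 1$, yields
\[
 R(\tau\mJ_-)^*\mathbf{Q}_-R(\tau\mJ_-)-\mathbf{Q}_-=\tau(\mJ_-^*\mathbf{Q}_-+\mathbf{Q}_-\mJ_-)+\bigO(\tau^2)=-\tau\mI+\bigO(\tau^2),
\]
which is strictly negative definite for all sufficiently small $\tau>0$. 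Both bounds hold once $\tau$ is below the minimum of the two thresholds, which depend only on $\mA$.

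The main obstacle I anticipate is conceptual rather than technical: one has to recognise that in the weak-form setting the weight $\mP$ may -- and must -- be tailored to the spectral structure of $\mA$ (independently of $\tau$), so that the imaginary and strictly-stable spectral components decouple cleanly in the same inner product. Once this block structure is in place, the only non-trivial input on the scheme is the scalar local stability of $R$ on the imaginary axis; no finer spectral estimate, and in particular no use of the HC-index as in Theorem~\ref{thm.main}, seems to be required.
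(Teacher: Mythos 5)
Your proof is correct, but the backward (substantive) direction follows a genuinely different route from the paper's. The paper reuses its earlier machinery: it first invokes the weak-form analogue of Theorem~\ref{thm.stab} to reduce to the two sets $\cLAS$ and $\cLSHscalar$ (via Lemma~\ref{lem.sim} and the block-diagonal form of Lemma~\ref{lem:blockDiagonalForm}), and then, for $\mA\in\cLAS$, picks $\mP$ from the \emph{strict} Lyapunov inequality~\eqref{strict_ineq:Lyapunov} so that $\hA=\mP^{1/2}\mA\mP^{-1/2}$ is dissipative with $\mHC(\hA)=0$, at which point Theorem~\ref{thm.main}(a) yields $\|R(\tau\hA)\|_2\le 1$. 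You instead split a general Lyapunov stable $\mA$ directly by a Jordan-type similarity into a diagonal purely imaginary block (non-defectiveness of the imaginary-axis eigenvalues is exactly what makes this block diagonal) and a Hurwitz block, and you build $\mP$ explicitly from the identity on the first block and a strict Lyapunov solution $\mathbf{Q}_-$ on the second; the contraction then decouples, with local stability on the imaginary axis handling the first block and the elementary expansion $R(z)=1+z+\bigO(z^2)$ handling the second. Your route is self-contained and more elementary: it bypasses the staircase form, the HC-index, and the short-time asymptotics of $\|e^{t\mA}\|_2$ from \cite{AAC22} entirely (the paper only needs Theorem~\ref{thm.main}(a) in the trivial case $\mHC=0$, which amounts to the same first-order computation you carry out), and it makes the gluing of the weights for the two spectral components explicit --- a step the paper leaves implicit when it asserts the weak-form analogue of Theorem~\ref{thm.stab}. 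What the paper's route buys is brevity within its own framework and a weight $\mP$ that stays inside the semi-dissipative/HC-index setting used throughout the article. The forward direction is essentially identical in both arguments: restriction to scalar $\mA=\mathrm{i}\omega$, where any admissible weight is a positive scalar and cancels, so weak-form stability coincides with local stability on the imaginary axis.
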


Due to Theorem~\ref{thm.stab}, an explicit Runge--Kutta method is strongly stable if and only if it is strongly stable w.r.t. to the sets~$\cLAS$ and~$\cLSHscalar$.
For asymptotically stable, semi-dissipative matrices~$\mA\in\cLAS$, using a suitable, modified inner product norm ensures  monotonicity/strong stability.

\bigskip
This paper is organized as follows.
We prove Theorem \ref{thm.stab} in Section \ref{sec.stab}, while Section \ref{sec.main} is concerned with the proof of Theorem \ref{thm.main}. Finally, we present the proof of Theorem \ref{thm.StrongStabilityInWeakForm} in Section \ref{sec:StrongStability.of.ERK:weak.form}.


\section{Proof of Theorem \ref{thm.stab}}\label{sec.stab}

The proof is based on a reduction of the matrix $\mA$ to block diagonal form. First, we recall some preparatory results and then detail the reduction strategy.

\medskip\noindent\textbf{Preparations.}
It is well-known that a similarity transformation of a Lyapunov stable matrix yields a semi-dissipative transformed matrix.

\begin{lemma}\label{lem.sim}
Let $\mA\in\Cnn$ be Lyapunov stable. Then there exists $\mP\in\PDHn$ such that $\widehat{\mA}:=\mP^{1/2}\mA\mP^{-1/2}$ is semi-dissipative and $\widehat{u}:=\mP^{1/2}u$ transforms \eqref{ODE:A} into the semi-dissipative ODE system $\mathrm{d}\widehat{u}/\mathrm{d}t=\widehat{\mA}\widehat{u}$. Here, $\mP^{-1/2}:=(\mP^{-1})^{1/2}$.
\end{lemma}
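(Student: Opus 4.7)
The plan is to use the Lyapunov-inequality characterization of Lyapunov stability that was already recalled in the excerpt, together with a congruence argument to turn the matrix inequality~\eqref{ineq:Lyapunov} into the statement that the similarity-transformed matrix has a negative semi-definite Hermitian part.

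First, I would invoke the characterization cited after~\eqref{ineq:Lyapunov} (see \cite[Corollary to Theorem 5]{Str75}, \cite[Theorem 3.18]{HadChe08}): since $\mA\in\Cnn$ is Lyapunov stable, there exists $\mP\in\PDHn$ with $\mA^*\mP+\mP\mA\le 0$. Because $\mP>0$, the principal square root $\mP^{1/2}$ exists, is Hermitian positive definite, and satisfies $(\mP^{1/2})^{-1}=\mP^{-1/2}=(\mP^{-1/2})^*$. I would then define $\hA:=\mP^{1/2}\mA\mP^{-1/2}$ and compute
\[
 \hA+\hA^* = \mP^{1/2}\mA\mP^{-1/2} + \mP^{-1/2}\mA^*\mP^{1/2}
 = \mP^{-1/2}\bigl(\mP\mA + \mA^*\mP\bigr)\mP^{-1/2}.
\]
Since $\mP\mA+\mA^*\mP\le 0$ and congruence with the invertible Hermitian matrix $\mP^{-1/2}$ preserves the sign of a Hermitian form, this gives $\hA+\hA^*\le 0$, i.e.\ $\hA$ is semi-dissipative.

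For the ODE transformation, I would set $\hat u(t):=\mP^{1/2}u(t)$. Differentiating and inserting $u=\mP^{-1/2}\hat u$ yields
\[
 \frac{\dd[\hat u]}{\dt} = \mP^{1/2}\frac{\dd[u]}{\dt} = \mP^{1/2}\mA u = \mP^{1/2}\mA\mP^{-1/2}\hat u = \hA\,\hat u,
\]
which is the desired semi-dissipative system. There is really no hard step here; the only point requiring a little care is to observe that $\mP^{1/2}$ and $\mP^{-1/2}$ are Hermitian (so the adjoint of $\hA$ has the clean form used above) and that congruence with a positive definite matrix preserves negative semi-definiteness. The rest is an immediate consequence of the Lyapunov inequality.
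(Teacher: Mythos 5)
Your proposal is correct and follows essentially the same route as the paper's proof: both obtain $\mP$ from the Lyapunov characterization and identify $\hA+\hA^*$ with the congruence transform $\mP^{-1/2}(\mA^*\mP+\mP\mA)\mP^{-1/2}\le 0$, then derive the transformed ODE by left-multiplying with $\mP^{1/2}$. No gaps.
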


\begin{proof}
Since $\mA$ is Lyapunov stable, there exists  $\mP\in\PDHn$ such that $\mA^*\mP+\mP\mA\le 0$. A congruence transformation with the Hermitian matrix $\mP^{-1/2}$ yields
$$
  0\ge \mP^{-1/2}(\mA^*\mP +\mP\mA)\mP^{-1/2}
   = \mP^{-1/2} \mA^* \mP^{1/2} +\mP^{1/2} \mA \mP^{-1/2}
   = 2(\mP^{1/2} \mA \mP^{-1/2})_H,
$$
proving that $\widehat{\mA}$ is semi-dissipative. Finally, we multiply \eqref{ODE:A} from the left by $\mP^{1/2}$ to find that
$\mathrm{d}\widehat{u}/\mathrm{d}t = \mP^{1/2}\mA u = (\mP^{1/2}\mA\mP^{-1/2})\widehat{u}= \widehat{\mA}\widehat{u}$.
\hfill$\Box$
\end{proof}

We can characterize semi-dissipative matrices that are {\em not} asymptotically stable; see \cite[Lemma 3.1]{MehMS16}, \cite[Lemma 2.4 with Prop.~1 (B2), (B4)]{AAC18}.

\begin{proposition} \label{prop:border}
Let $\mA\in\Cnn$ be semi-dissipative. Then $\mA$ has an eigenvalue on the imaginary axis (and is hence \emph{not} asymptotically stable) if and only if $\mAH v =0$ for some eigenvector~$v$ of~$\mAS$.
\end{proposition}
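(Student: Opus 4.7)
The plan is to prove both implications directly from the splitting $\mA = \mAH + \mAS$, using the standard facts that $\ip{v}{\mAH v}\in\R$ while $\ip{v}{\mAS v}\in\mathrm{i}\R$, together with the hypothesis $\mAH\le 0$.

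For the easy direction ($\Leftarrow$), I would assume $\mAS v = \mu v$ with $v\neq 0$ and $\mAH v = 0$. Then $\mA v = \mAH v + \mAS v = \mu v$, and since the spectrum of the skew-Hermitian matrix $\mAS$ lies on the imaginary axis, $\mu$ is a purely imaginary eigenvalue of $\mA$. In particular $\mA$ is not asymptotically stable.

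For the main direction ($\Rightarrow$), suppose $\mA v = \lambda v$ for some $\lambda\in\mathrm{i}\R$ and $v\neq 0$. Testing against $v$ yields
\[
  \lambda \norm{v}^2
  = \ip{v}{\mA v}
  = \ip{v}{\mAH v} + \ip{v}{\mAS v}.
\]
The first summand on the right is real (as $\mAH$ is Hermitian) and the second is purely imaginary (as $\mAS$ is skew-Hermitian), so taking real parts yields $\ip{v}{\mAH v} = 0$. Since $\mA$ is semi-dissipative, $-\mAH\ge 0$; writing $-\mAH = B^*B$ via its positive semi-definite square root, the identity $\norm{Bv}^2 = \ip{v}{(-\mAH)v} = 0$ forces $Bv = 0$ and hence $\mAH v = -B^*(Bv) = 0$. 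Substituting back, $\mAS v = \mA v - \mAH v = \lambda v$, so $v$ is an eigenvector of $\mAS$ annihilated by $\mAH$, as required.

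The only mildly technical point is the implication $\ip{v}{\mAH v} = 0 \Rightarrow \mAH v = 0$ under the sign condition $\mAH\le 0$, which is the standard positive semi-definite square-root argument sketched above (equivalently, a Cauchy--Schwarz application to the semi-inner product induced by $-\mAH$). Beyond this, both directions are essentially one-line verifications, so I do not anticipate any real obstacle.
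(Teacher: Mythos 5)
Your proof is correct. The paper itself does not prove Proposition~\ref{prop:border} but only cites \cite[Lemma 3.1]{MehMS16} and \cite[Lemma 2.4, Prop.~1]{AAC18}; your argument --- taking the real part of $\ip{v}{\mA v}$ to get $\ip{v}{\mAH v}=0$ and then using semi-definiteness of $-\mAH$ to upgrade this to $\mAH v=0$ --- is exactly the standard route those references take, and both directions check out.
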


Note that, due to the assumption on $\mA$, purely imaginary eigenvalues of semi-dissipative matrices are necessarily semi-simple, i.e., their algebraic and geometric multiplicities coincide; see \cite{MehMW18,MehMW20}.

\emph{Unitary congruence transformations} $\mA\mapsto \mU\mA \mU^*$ with unitary matrices~$\mU$, preserve (asymptotic) stability, semi-dissipativity, and the HC-index (cf.\ \eqref{mHC-ineq}).
For semi-dissipative matrices~$\mA\in\Cnn$, the pair $(\mJ,\mR):=(\mAS,\mAH)$ can be transformed into a (so-called) \emph{staircase form} by using unitary congruence transformations \cite[Lemma 57]{AAM23}.

\begin{lemma}[Staircase form for $(\mJ,\mR)$] \label{lem:SF}
Let $\mJ\in\Cnn$ be a skew-Hermitian matrix and $\mR\in\Cnn$ be a nonzero Hermitian matrix.
Then there exists a unitary matrix $\mV\in\Cnn$ such that~$\mV \mJ \mV^*$ and~$\mV \mR \mV^*$ are block tridiagonal matrices of the form
{\small
\setlength\arraycolsep{4pt}
\begin{subequations}\label{matrices:staircase:J_R}
\begin{eqnarray} \label{matrices:staircase:J}
\mV \mJ \mV^*
\!\!\!&=&\!\!\!\!\! \begin{array}{l}
\left[ \begin{array}{ccccccc|c}
 \mJ_{1,1} & -\mJ_{2,1}^* & & & \cdots & & 0 & 0\\
 \mJ_{2,1} & \mJ_{2,2} & -\mJ_{3,2}^* & & & & &\\
  & \ddots & \ddots & \ddots & & & \vdots & \\
  & & \mJ_{k,k-1} & \mJ_{k,k} & -\mJ_{k+1,k}^* & & & \vdots \\
 \vdots & & & \ddots & \ddots & \ddots & & \\
  & & & & \mJ_{\SFI-2,\SFI-3} & \mJ_{\SFI-2,\SFI-2} & -\mJ_{\SFI-1,\SFI-2}^* & \\
 0 & & \cdots & & & \mJ_{\SFI-1,\SFI-2} & \mJ_{\SFI-1,\SFI-1} & 0\\ \hline
 0 & & & \cdots & & & 0 & \mJ_{\SFI,\SFI}
\end{array}\right]
 \begin{array}{c}
  n_1\\ n_2\\[2pt] \vdots\\[4pt] n_k\\[2pt] \vdots \\[2pt] n_{\SFI-2} \\n_{\SFI-1}\\[1pt] n_\SFI
 \end{array} \\
 \quad\hspace{5pt} n_1 \hspace{130pt} n_{\SFI-2} \hspace{22pt} n_{\SFI-1}  \hspace{18pt} n_\SFI,
\end{array} \\
&&\nonumber\\
\mV \mR \mV^*
\!\!\!&=&\!\!\!\!\! \begin{array}{l}
\left[ \begin{array}{cccc|c}
 \mR_1 & 0 & \cdots & 0 & 0 \\
   0 & 0 & & \vdots & \vdots \\
 \vdots & & \ddots & \vdots & \vdots \\
 0 & \cdots & \cdots & 0 & 0 \\
\hline
 0 & \cdots & \cdots & 0 & 0 
\end{array}\right]
 \begin{array}{c}
  n_1\\[4pt] n_2\\ \vdots\\ n_{\SFI-1}\\ n_\SFI
 \end{array} \\
 \quad\hspace{1pt} n_1 \hspace{8pt} n_2 \hspace{4pt} \cdots \hspace{4pt} n_{\SFI-1} \hspace{6pt} n_\SFI,
\end{array}
\label{matrices:staircase:J-R}
\end{eqnarray}
\end{subequations}}
where $n_1 \geq n_2 \geq \cdots \geq n_{\SFI-1} >0$, $n_\SFI \geq 0$, and $\mR_1\in\C^{n_1,n_1}$ is nonsingular.

If $\mR$ is nonsingular, then~$\SFI=2$ and~$n_2=0$.
For example, $\mV=\mI$, $\mJ_{1,1}=\mJ$, and~$\mR_1=\mR$ is an admissible choice.

If $\mR$ is singular, then $\SFI\geq 3$ and the matrices~$\mJ_{i,i-1}$, $i=2,\ldots,\SFI-1$, in the subdiagonal have full row rank and are of the form
\begin{equation*}
\mJ_{i,i-1} = \begin{bmatrix} \Sigma_{i,i-1} & 0 \end{bmatrix}, \quad
i =2,\ldots, \SFI-1,
\end{equation*}
with nonsingular matrices~$\Sigma_{i,i-1}\in\C^{n_i\times n_i}$, and moreover $\Sigma_{\SFI-1,\SFI-2}$ is a real-valued diagonal matrix.
\end{lemma}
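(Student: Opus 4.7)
The plan is to build the staircase form by an iterative unitary compression, analogous to the Kalman controllability staircase from linear systems theory, with $\range(\mR)$ playing the role of the input subspace and $\mJ$ the system matrix. The skew-Hermiticity of $\mJ$ then automatically mirrors each row compression to its transposed column counterpart, producing the zero blocks above and below the tridiagonal band for free.

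First I would diagonalise $\mR$ compatibly with its range: since $\mR=\mR^*\ne 0$, a unitary $\mV_1$ aligned with the splitting $\C^n=\range(\mR)\oplus\ker(\mR)$ yields $\mV_1\mR\mV_1^*=\diag(\mR_1,0)$ with $\mR_1\in\C^{n_1\times n_1}$ nonsingular and $n_1:=\rank(\mR)$. In the same partition, $\mV_1\mJ\mV_1^*$ takes the block form $\left(\begin{smallmatrix}\mJ_{1,1} & -B_1^*\\ B_1 & \mJ_{\mathrm{rest}}\end{smallmatrix}\right)$ by skew-Hermiticity. If $\mR$ is nonsingular, this already gives the claim with $\SFI=2$ and $n_2=0$; otherwise I enter an iteration indexed by $k\geq 1$. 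At step $k$, given the coupling block $B\in\C^{N\times n_k}$ from the uncompressed tail (size $N$) to block $k$, set $n_{k+1}:=\rank(B)$ and take an SVD $B=U\left(\begin{smallmatrix}D & 0\\ 0 & 0\end{smallmatrix}\right)V^*$ with $D\in\R^{n_{k+1}\times n_{k+1}}$ diagonal positive. Conjugating $\mJ$ by $\diag(I,\dots,I,V^*,U^*)$ (identity on all earlier blocks, $V^*$ on block $k$, $U^*$ on the tail) turns the coupling block into $\left(\begin{smallmatrix}[D\ 0]\\ 0\end{smallmatrix}\right)$; after relabelling, this is $\mJ_{k+1,k}=[\Sigma_{k+1,k}\ 0]$ with $\Sigma_{k+1,k}=D$ nonsingular, while the induced $(k+2,k)$ block automatically vanishes, and skew-Hermiticity delivers the matching zero in position $(k,k+2)$. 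The size inequality $n_{k+1}\leq n_k$ is forced by $n_{k+1}=\rank(B)\leq n_k$. The iteration terminates when either the uncompressed tail becomes empty ($n_\SFI=0$), or the next coupling block vanishes, in which case the residual $\mJ_{\SFI,\SFI}\in\C^{n_\SFI\times n_\SFI}$ sits decoupled at the bottom right exactly as in~\eqref{matrices:staircase:J}.

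The main technical point, and the step that needs the most care, is checking that each compression preserves what was already built. Since $\diag(I,\dots,I,V^*,U^*)$ is the identity on all blocks strictly before block $k$, every zero block and every prior $\Sigma_{j,j-1}$ with $j<k$ is untouched; the one nontrivial check is that the preceding subdiagonal block $\mJ_{k,k-1}=[\Sigma_{k,k-1}\ 0]$ becomes $V^*[\Sigma_{k,k-1}\ 0]=[V^*\Sigma_{k,k-1}\ 0]$, which is still of the required compressed form with $V^*\Sigma_{k,k-1}$ nonsingular. Finally, the real-diagonal property of $\Sigma_{\SFI-1,\SFI-2}$ comes for free in this construction: the SVD at the very last iteration already produces $D$ real diagonal, and because the process terminates immediately afterwards, no subsequent unitary ever left-multiplies that block and so its real-diagonal form survives verbatim into the final staircase.
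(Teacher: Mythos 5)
The paper does not prove this lemma at all: it is imported verbatim from the reference \cite[Lemma 57]{AAM23}, so there is no in-paper argument to compare against. Your construction --- the Van Dooren--type staircase algorithm, i.e.\ an orthogonal splitting $\C^n=\range(\mR)\oplus\ker(\mR)$ followed by iterated SVD-based rank compression of the coupling blocks, with skew-Hermiticity supplying the mirrored zeros above the diagonal --- is correct and is essentially the standard proof of such staircase forms; the key verifications (that the transformation at step $k$ is the identity on earlier blocks, that $\mJ_{k,k-1}$ only gets left-multiplied by a unitary and so keeps the shape $[\,\Sigma\ 0\,]$ with $\Sigma$ nonsingular, and that the last block $\Sigma_{\SFI-1,\SFI-2}$ is never touched again and hence stays real diagonal) are all present and sound. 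One minor caveat worth recording: in the degenerate case where $\mR$ is singular but the very first coupling block $B_1$ already vanishes (i.e.\ $\range(\mR)$ is $\mJ$-invariant), your iteration terminates with $\SFI=2$ and $n_2=n-n_1>0$, which fits the displayed block structure \eqref{matrices:staircase:J_R} but not the side claim ``$\mR$ singular $\Rightarrow\SFI\geq 3$''; this is an artifact of how the statement indexes the decoupled tail block rather than a defect of your argument, but you should say explicitly how you label that case.
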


For semi-dissipative matrices $\mA\in\Cnn$, the staircase form of $(\mJ,\mR)=(\mAS,\mAH)$ can be used to derive a unitary congruence transformation to a block diagonal form.

\begin{lemma} \label{lem:blockDiagonalForm}
Let $\mA\in\Cnn$ be semi-dissipative.
Then there exists a unitary matrix $\mV\in\Cnn$ such that~$\mV \mA \mV^*$ is a block diagonal matrix of the form
\begin{equation} \label{A.staircaseForm}
 \mV\ \mA\ \mV^*
= 
\left[ \begin{array}{c|c}
 \mA_1 & 0\\
\hline
 0 & \mA_2
\end{array}\right],
\end{equation}
where $\mA_1\in\C^{\tilde n_1\times \tilde n_1}$, $\mA_2\in\C^{\tilde n_2\times \tilde n_2}$ with $\tilde n_1,\tilde n_2\in\N_0$ such that $\tilde n_2 =n-\tilde n_1$.
Moreover, the semi-dissipative matrix $\mA_1\in\C^{\tilde n_1\times \tilde n_1}$ is asymptotically stable, and the semi-dissipative matrix $\mA_2\in\C^{\tilde n_2\times \tilde n_2}$ is skew-Hermitian.
\end{lemma}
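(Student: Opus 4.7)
The plan is to apply Lemma~\ref{lem:SF} to the Hermitian/skew-Hermitian decomposition $(\mJ,\mR):=(\mAS,\mAH)$ of $\mA$ (the case $\mR=0$ being trivial: take $\mV=\mI$, $\tilde n_1=0$, $\mA_2:=\mA$, which is skew-Hermitian). The key structural observation is that, in the staircase form~\eqref{matrices:staircase:J_R}, the last block row and column of $\mV\mJ\mV^*$ vanish outside the diagonal entry $\mJ_{\SFI,\SFI}$, while $\mV\mR\mV^*$ has vanishing entries throughout its last block row and column. Summing these two contributions therefore produces
\begin{equation*}
 \mV\mA\mV^* = \begin{bmatrix} \mA_1 & 0 \\ 0 & \mA_2 \end{bmatrix},
\end{equation*}
with $\tilde n_1:=n-n_\SFI$, $\tilde n_2:=n_\SFI$, $\mA_2:=\mJ_{\SFI,\SFI}$, and $\mA_1$ the leading principal submatrix of $\mV\mA\mV^*$ corresponding to the first $\SFI-1$ blocks.

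Both $\mA_1$ and $\mA_2$ are semi-dissipative as principal submatrices of the semi-dissipative matrix $\mV\mA\mV^*$. Since the Hermitian part of $\mA_2$ is the trailing zero block of $\mV\mR\mV^*$, the matrix $\mA_2=\mJ_{\SFI,\SFI}$ is skew-Hermitian as claimed. It remains to establish that $\mA_1$ is asymptotically stable, which is the main step.

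For this I would invoke Proposition~\ref{prop:border}: asymptotic stability of the semi-dissipative matrix $\mA_1$ is equivalent to the statement that no eigenvector of $(\mA_1)_S$ lies in $\ker(\mA_1)_H$. Assume, for contradiction, that some nonzero $v=(v_1,\ldots,v_{\SFI-1})^T$, blocked according to $n_1,\ldots,n_{\SFI-1}$, satisfies $(\mA_1)_S v=\lambda v$ and $(\mA_1)_H v=0$. Since $(\mA_1)_H=\diag(\mR_1,0,\ldots,0)$ with $\mR_1$ nonsingular, it follows that $v_1=0$. Block-row $i=1$ of $(\mA_1)_S v=\lambda v$ then collapses to $\mJ_{2,1}^* v_2=0$, and the full row rank of $\mJ_{2,1}$ (equivalently, the triviality of $\ker\mJ_{2,1}^*$) forces $v_2=0$. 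Iterating through block rows $i=2,\ldots,\SFI-2$ --- each equation reducing to $\mJ_{i+1,i}^* v_{i+1}=0$ once the preceding zeros $v_{i-1}=v_i=0$ are substituted --- yields $v_3=\cdots=v_{\SFI-1}=0$, contradicting $v\neq 0$.

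The argument is essentially structural once the staircase form is available, and I expect the only subtlety to be the translation of ``$\mJ_{i+1,i}\in\C^{n_{i+1}\times n_i}$ has full row rank (with $n_{i+1}\leq n_i$)'' into ``$\mJ_{i+1,i}^*$ has trivial kernel on $\C^{n_{i+1}}$'', which is precisely the mechanism that propagates $v_1=0$ through to $v_{\SFI-1}=0$. The degenerate sub-cases fall out for free: when $\mR$ is nonsingular, $\SFI=2$ and $n_2=0$ give $\mA_2$ empty and $\mA_1=\mV\mA\mV^*$ asymptotically stable by the nonsingularity of $\mR_1=\mR$ alone; when $\mR=0$ the decomposition was set up at the outset with $\tilde n_1=0$.
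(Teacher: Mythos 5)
Your proposal is correct and follows essentially the same route as the paper's proof: apply the staircase form of Lemma~\ref{lem:SF} to $(\mAS,\mAH)$, read off the block-diagonal splitting with $\mA_2=\mJ_{\SFI,\SFI}$, and rule out imaginary eigenvalues of $\mA_1$ via Proposition~\ref{prop:border} by propagating $v_1=0$ through the subdiagonal blocks using the full column rank of $\mJ_{i,i-1}^*$. The treatment of the degenerate cases ($\mAH=0$, $\mR$ nonsingular) also matches the paper.
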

In the block-diagonal form~\eqref{A.staircaseForm}, not both matrices $\mA_1,\mA_2$ have to be present. Indeed,
if $\mA$ is skew-Hermitian then $\tilde n_1=0$, $\tilde n_2=n$ and $\mA_2 =\mA$, and if $\mA$ is asymptotically stable then $\tilde n_1=n$, $\tilde n_2=0$ and $\mA_1 =\mA$.
\begin{proof}[of Lemma \ref{lem:blockDiagonalForm}]
If the Hermitian part~$\mAH$ of~$\mA$ is the null matrix then we set $\mV=\mI$, $\mA_2=\mA=\mAS$, $\tilde n_1=0$, and $\tilde n_2=n$.

If the Hermitian part $\mAH$ is not the null matrix, we deduce from Lemma~\ref{lem:SF} 
for the pair $(\mJ,\mR):=(\mAS,\mAH)$ the existence of a unitary matrix $\mV\in\Cnn$ 
such that~$\mV\mA\mV^* = \mV\mJ\mV^* + \mV\mR\mV^*$ given in~\eqref{matrices:staircase:J_R} holds with $\mR_1<0$ (although the matrix $\mJ_{\SFI,\SFI}$ may not be present).
If the matrix $\mJ_{\SFI,\SFI}$ is present, then $\mA_2=\mJ_{\SFI,\SFI}$, $\tilde n_2=n_\SFI$, $\tilde n_1=n-n_\SFI>0$, otherwise~$\tilde n_2=0$ and $\tilde n_1=n$.
If the matrix $\mA_2$ is present, it is skew-Hermitian, since it is a (non-leading) principal minor of the skew-Hermitian matrix $\mV\mAS \mV^*$. 

Because of $\mAH\ne0$, the matrix $\mA_1\ne 0$ is the leading principal minor of $\mV\mA \mV^*$ given as 
\begin{align*}
\mA_1
:= 
\begin{array}{l}
\left[ \begin{array}{ccccccc}
 \mJ_{1,1}+\mR_1 & -\mJ_{2,1}^* & & & \cdots & & 0 \\
 \mJ_{2,1} & \mJ_{2,2} & -\mJ_{3,2}^* & & & & \\
  & \ddots & \ddots & \ddots & & & \vdots \\
  & & \mJ_{k,k-1} & \mJ_{k,k} & -\mJ_{k+1,k}^* & & \\
 \vdots & & & \ddots & \ddots & \ddots & \\
  & & & & \mJ_{\SFI-2,\SFI-3} & \mJ_{\SFI-2,\SFI-2} & -\mJ_{\SFI-1,\SFI-2}^* \\
 0 & & \cdots & & & \mJ_{\SFI-1,\SFI-2} & \mJ_{\SFI-1,\SFI-1} 
\end{array}\right]
 \begin{array}{c}
  n_1\\ n_2\\[2pt] \vdots\\[2pt] n_k\\[1pt] \vdots \\ n_{\SFI-2} \\n_{\SFI-1}
 \end{array}, \\
 \quad\hspace{5pt} n_1 \hspace{140pt} n_{\SFI-2} \hspace{25pt} n_{\SFI-1}  
\end{array} 
\end{align*}
and $\mA_1\in \C^{\tilde n_1\times \tilde n_1}$ is semi-dissipative. 

Finally, we apply Proposition~\ref{prop:border} in $\C^{\tilde n_1}$ to deduce that $\mA_1$ is asymptotically stable. If this were not true, there would exist a vector $\tilde v_1=(0,v_2,...,v_{r-1})^T$ in the kernel of $(\mA_1)_H=\diag(\mR_1,0,...,0)$ (see \eqref{matrices:staircase:J-R}) that is an eigenvector of $(\mA_1)_S$. The latter has the block tridiagonal shape in \eqref{matrices:staircase:J}. Since $\mJ^*_{i,i-1}$, $i=2,...,r-1$, has full column rank, we obtain iteratively $\tilde v_1=0$, which cannot be an eigenvector of $(\mA_1)_S$.
\hfill $\Box$
\end{proof}
\begin{remark}\label{stair-HC}
A semi-dissipative matrix $\mA=\mAS+\mAH$ is asymptotically stable if and only if $n_\SFI=0$ holds
in the staircase form~\eqref{matrices:staircase:J-R} of $(\mJ,\mR)=(\mAS,\mAH)$; see the final step in the proof of Lemma~\ref{lem:blockDiagonalForm}. 
If this is the case, the HC-index satisfies $\mHC(\mA)=\SFI-2$; see~\cite[Lemma 4]{AAM21}.
\end{remark}


\medskip\noindent\textbf{Reduction steps.}
We prove the structural result in Theorem~\ref{thm.stab} by performing several reduction steps.

\emph{Step 1. Reduction to semi-dissipative matrices.}
First, we show that strong stability of explicit Runge--Kutta schemes w.r.t. Lyapunov stable matrices $\mA\in\cL$ is equivalent to strong stability w.r.t. semi-dissipative matrices $\hA\in\cLSD$ (as already stated in  Remark~\ref{rem:stab}). To this end,
we consider a Lyapunov stable matrix $\mA\in\Cnn$ and $\mP\in\PDHn$ satisfying~\eqref{ineq:Lyapunov}. By Lemma \ref{lem.sim},
$\hA :=\mP^{1/2}\mA\mP^{-1/2}$ is semi-dissipative. We define
$\widehat{u}^0:=\mP^{1/2}u^0$, $u^1:=R(\tau\mA)u^0$, and $\widehat{u}^1:=R(\tau\hA)\widehat{u}^0$. Then
$\|u^0\|_\mP =\|\mP^{1/2} u^0\| =\|\widehat{u}^0\|$ and 
\begin{align*}
  \|u^1\|_\mP &=\|R(\tau \mA) u^0\|_\mP =\|\mP^{1/2} R(\tau \mA) u^0\| \\
  &=\|R(\tau \mP^{1/2}\mA\mP^{-1/2}) \mP^{1/2} u^0\| 
  = \|R(\tau \hA) \widehat{u}^0\| = \|\widehat{u}^1\| .
\end{align*}
Hence, $\|u^1\|_\mP\leq \|u^0\|_\mP$ if and only if $\|\widehat{u}^1\| \leq\|\widehat{u}^0\|$.

\emph{Step 2. Reduction to semi-dissipative matrices in block diagonal form.}
Considering the unitary congruence transformation from  Lemma~\ref{lem:blockDiagonalForm}, we note that $\|\mV u^j\|=\|u^j\|$, with the unitary matrix $\mV$ from \eqref{A.staircaseForm}.
Therefore, an explicit Runge--Kutta scheme is strongly stable if and only if it is strongly stable w.r.t. semi-dissipative matrices~$\mA$ of the form~\eqref{A.staircaseForm}.

\emph{Step 3. Reduction to two distinct subsets of semi-dissipative matrices.}
Due to the block diagonal structure of~$\mA$ in~\eqref{A.staircaseForm}, an explicit Runge--Kutta scheme is strongly stable if and only if it is strongly stable w.r.t.~two distinct subsets of semi-dissipative matrices, namely
\begin{enumerate}[(a)]
 \item 
asymptotically stable, semi-dissipative matrices $\mA\in\Cnn$, and
 \item \label{case:SH}
skew-Hermitian matrices $\mA\in\Cnn$ (hence satisfying $\mA+\mA^*=0$).
\end{enumerate}

\emph{Step 4. Reduction of skew-Hermitian matrices to (purely imaginary) scalar problems.}
Skew-Hermitian matrices $\mA\in\Cnn$ are unitarily congruent to a diagonal matrix with purely imaginary eigenvalues, i.e., there exists a unitary matrix $\mU\in\Cnn$ such that $\mU\mA \mU^* =\Lambda$ with $\Lambda=\diag(\lambda_1^\mA,\ldots,\lambda_n^\mA)$, where $ \lambda_j^\mA\in \I\R$.
Therefore, in case~\eqref{case:SH}, the analysis can be reduced to scalar ODEs of the form 
\begin{equation*} 
 \frac{\mathrm{d}u}{\mathrm{d}t} =\lambda u, 
\quad t\geq 0; 
\quad \lambda\in \I\R.
\end{equation*}
This finishes the proof of Theorem~\ref{thm.stab}. 
\hfill $\Box$

\medskip
We already mentioned that the (strong) stability of Runge--Kutta schemes for scalar ODEs is well studied; see, e.g., \cite{KrSc92}.
By definition, strong stability w.r.t.~$\cLSHscalar$ is equivalent to \emph{local stability on the imaginary axis} as defined in~\cite[Definition 2.1]{KrSc92} (and recalled in \S\ref{sec:main_results}).
Thus, we deduce from \cite[Theorem 3.1]{KrSc92} the following statement.

\begin{theorem}\label{thm.LS}
The explicit Runge--Kutta method ~\eqref{1.RK} of order $p\in\N$ is strongly stable w.r.t.~$\cLSHscalar$ if
\begin{align*} 
  \gamma_{p+1} :=
  (-1)^{(p+1)/2}(1-c_{p+1}) > 0 &\quad \mbox{ for $p$ odd,} \\
  \delta_{p+1}:=(-1)^{p/2}\big(c_{p+2} - (p+2)c_{p+1} + (p+1)\big) > 0
  &\quad \mbox{ for $p$ even.}
\end{align*} 
If $\gamma_{p+1}<0$ or $\delta_{p+1}<0$, the Runge--Kutta method is not strongly stable. 
\end{theorem}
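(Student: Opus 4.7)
The plan is to exploit the equivalence, noted just before the theorem, between strong stability w.r.t.\ $\cLSHscalar$ and local stability on the imaginary axis, i.e.\ $|R(iy)|\le 1$ for all sufficiently small $|y|$. Since the Runge--Kutta method is of order $p$, the residual $R(z)-e^z$ has real Taylor coefficients and vanishes to order $p$ at $z=0$:
\[
R(z) - e^z = \sum_{j=p+1}^{\infty} \frac{(c_j-1)\,z^j}{j!}, \qquad c_j := 0 \text{ for } j>s.
\]
Evaluating at $z=iy$ and using $\overline{R(iy)}=R(-iy)$, the identity $|R(iy)|^2-1 = 2\operatorname{Re}\bigl(e^{-iy}(R(iy)-e^{iy})\bigr) + |R(iy)-e^{iy}|^2$ reduces the task to extracting the leading real part of $e^{-iy}(R(iy)-e^{iy})$ as $y\to 0$; the quadratic remainder is $O(y^{2p+2})$ and cannot affect the leading behavior.

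For odd $p$, I would substitute $e^{-iy}=1+O(y)$ and keep only the leading residual term $(c_{p+1}-1)(iy)^{p+1}/(p+1)!$. Because $p+1$ is even, $i^{p+1}=(-1)^{(p+1)/2}$ is real, and a direct computation yields
\[
|R(iy)|^2 - 1 = -\frac{2\gamma_{p+1}}{(p+1)!}\,y^{p+1} + O(y^{p+2}).
\]
Since $y^{p+1}>0$ for $y\neq 0$, the dichotomy is immediate: $\gamma_{p+1}>0$ gives $|R(iy)|<1$ for small $y\neq 0$ and hence strong stability w.r.t.\ $\cLSHscalar$, while $\gamma_{p+1}<0$ gives $|R(iy)|>1$ for small $y\neq 0$ and hence failure of local stability.

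For even $p$, the factor $i^{p+1}$ is purely imaginary, so the $y^{p+1}$ contribution drops out of the real quantity $|R(iy)|^2-1$. The leading nonzero real contribution therefore first appears at order $y^{p+2}$ and is assembled from two Taylor pieces: the $(c_{p+1}-1)(iy)^{p+1}/(p+1)!$ term combined with the linear correction $-iy$ in the expansion of $e^{-iy}$, and the $(c_{p+2}-1)(iy)^{p+2}/(p+2)!$ term combined with the constant $1$ of $e^{-iy}$. After tracking the powers of $i$ (using $i^{p+2}=(-1)^{p/2+1}$) and reducing to the common denominator $(p+2)!$, one should obtain
\[
|R(iy)|^2 - 1 = -\frac{2\delta_{p+1}}{(p+2)!}\,y^{p+2} + O(y^{p+3}),
\]
and the same sign analysis as for odd $p$ then completes the proof in both directions.

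The main obstacle I anticipate is the bookkeeping in the even case: two distinct Taylor contributions must be combined, and the alternating powers of $i$ must be tracked carefully so that the precise algebraic combination $\delta_{p+1}=(-1)^{p/2}\bigl(c_{p+2}-(p+2)c_{p+1}+(p+1)\bigr)$ emerges with the correct overall sign. Since this expansion is exactly the one performed in~\cite[Theorem~3.1]{KrSc92}, one may alternatively cite that result directly, after verifying that the normalization of the stability function used there matches the convention adopted in~\eqref{1.R}.
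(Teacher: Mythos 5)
Your proposal is correct, but it takes a different route from the paper: the paper does not carry out any expansion at all --- it simply observes that strong stability w.r.t.~$\cLSHscalar$ is, by definition, local stability on the imaginary axis in the sense of \cite[Definition 2.1]{KrSc92}, and then quotes \cite[Theorem 3.1]{KrSc92} verbatim. What you have done is reconstruct the computation behind that cited result. I checked your bookkeeping and it comes out right: the identity $|R(iy)|^2-1=2\operatorname{Re}\bigl(e^{-iy}(R(iy)-e^{iy})\bigr)+|R(iy)-e^{iy}|^2$ is valid, the quadratic remainder is $O(y^{2p+2})$ and hence harmless for $p\ge 1$; for odd $p$ the single term $(c_{p+1}-1)i^{p+1}y^{p+1}/(p+1)!$ with $i^{p+1}=(-1)^{(p+1)/2}$ gives exactly $-2\gamma_{p+1}y^{p+1}/(p+1)!$; and for even $p$ the two contributions $(c_{p+1}-1)(-1)^{p/2}/(p+1)!$ and $-(c_{p+2}-1)(-1)^{p/2}/(p+2)!$ combine over the denominator $(p+2)!$ to $-2\delta_{p+1}y^{p+2}/(p+2)!$, with $y^{p+1}$ resp.\ $y^{p+2}$ of even degree and hence positive for $y\ne 0$. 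The sign dichotomy then yields both directions of the statement. What your version buys is self-containedness, plus a transparent explanation of why the borderline cases $\gamma_{p+1}=0$ and $\delta_{p+1}=0$ are not decided at this order (the leading coefficient vanishes), which the paper only remarks on after the theorem; what the paper's version buys is brevity and a clean delegation of the normalization issues to \cite{KrSc92}. The only point worth making explicit in a final write-up is the reduction from ``$|R(iy)|\le 1$ for all sufficiently small $|y|$'' to the definition of strong stability w.r.t.~$\cLSHscalar$ (for each fixed $\lambda\in\I\R$ one has $\tau\lambda=iy$ with $|y|$ small for small $\tau$), which you use implicitly via the equivalence stated in the paper.
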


Notice that the case $\delta_{p+1}=0$ is left open in~\cite[Theorem 3.1]{KrSc92}. 
Complementary, $\gamma_{p+1}=0$ implies $c_{p+1}=1$, this would contradict that the Runge--Kutta method has order $p$, see~\eqref{1.R}.


\medskip\noindent\textbf{Examples to illustrate Corollary \ref{cor:ERK_p4N} and Theorem~\ref{thm.main}(b).}
The following two examples were used in the literature to investigate the strong stability of the explicit Runge--Kutta method with $s=p=4$.  Due to the new result in Corollary \ref{cor:ERK_p4N}, it is clear that this scheme is \emph{not} strongly stable. However, in view of Theorem \ref{thm.LS}, since $c_{p+1}=c_{p+2}=0$ and hence $\delta_{p+1}>0$, the method is strongly stable w.r.t.~$\cLSHscalar$.
We deduce from Theorem~\ref{thm.main}(a) that the method is strongly stable w.r.t.~$\cLAS^{m}$ with $m=1$.
Since~\eqref{c.even} holds, the considered Runge--Kutta scheme is \emph{not} strongly stable w.r.t.~$\{\mA\}$ if the semi-dissipative matrix $\mA\in\cLAS$ satisfies $2\mHC(\mA)+1>p$, i.e., if $\mHC(\mA)>3/2$; see Theorem~\ref{thm.main}(b).

\begin{example} \label{ex:Levy.Tadmor}
Levy and Tadmor~\cite[\S3.5]{LeTa98} use the example 
\begin{equation*} 
\mA 
=-5\begin{pmatrix} 
1 & 2 & 2 & 2 & 2 \\
0 & 1 & 2 & 2 & 2 \\
0 & 0 & 1 & 2 & 2 \\ 
0 & 0 & 0 & 1 & 2 \\
0 & 0 & 0 & 0 & 1
\end{pmatrix}
\quad\text{with }
\mAH 
=-5\begin{pmatrix} 
1 & 1 & 1 & 1 & 1 \\
1 & 1 & 1 & 1 & 1 \\
1 & 1 & 1 & 1 & 1 \\
1 & 1 & 1 & 1 & 1 \\
1 & 1 & 1 & 1 & 1 
\end{pmatrix}
\end{equation*}
to test numerically the strong stability of the explicit Runge--Kutta method with $s=p=4$.
The matrix~$\mA$ is semi-dissipative, but $\|R(\tau\mA)\| >1$ for some $\tau>0$.
The staircase form of $(\mJ,\mR)=(\mAS,\mAH)$ is 
{\small
\begin{equation*}
 \mV\mA \mV^*
=-5 \left[\begin{array}{ccccc}
5 & -2 \sqrt{2} & 0 & 0 & 0 
\\
 2 \sqrt{2} & 0 & \frac{\sqrt{35}}{5} & 0 & 0 
\\
 0 & -\frac{\sqrt{35}}{5} & 0 & -\frac{4 \sqrt{35}}{35} & 0 
\\
 0 & 0 & \frac{4 \sqrt{35}}{35} & 0 & \frac{\sqrt{7}}{7} 
\\
 0 & 0 & 0 & -\frac{\sqrt{7}}{7} & 0 
\end{array}\right] ,
\quad
 \mV
=\left[\begin{array}{ccccc}
\frac{\sqrt{5}}{5} & \frac{\sqrt{5}}{5} & \frac{\sqrt{5}}{5} & \frac{\sqrt{5}}{5} & \frac{\sqrt{5}}{5} 
\\
 \frac{\sqrt{10}}{5} & \frac{\sqrt{10}}{10} & 0 & -\frac{\sqrt{10}}{10} & -\frac{\sqrt{10}}{5} 
\\
 -\frac{\sqrt{14}}{7} & \frac{\sqrt{14}}{14} & \frac{\sqrt{14}}{7} & \frac{\sqrt{14}}{14} & -\frac{\sqrt{14}}{7} 
\\
 -\frac{\sqrt{10}}{10} & \frac{\sqrt{10}}{5} & 0 & -\frac{\sqrt{10}}{5} & \frac{\sqrt{10}}{10} 
\\
 \frac{\sqrt{70}}{70} & -\frac{2 \sqrt{70}}{35} & \frac{3 \sqrt{70}}{35} & -\frac{2 \sqrt{70}}{35} & \frac{\sqrt{70}}{70} 
\end{array}\right].
\end{equation*}}
In particular, $r=6$, $n_1=\cdots=n_5=1$, $n_6=0$.
Hence, the matrix~$\mA$ has the HC-index $\mHC=r-2=4$ (see Remark \ref{stair-HC}).
\end{example}

\begin{example} \label{ex:Sun.Shu}
To show that the explicit Runge--Kutta method with $s=p=4$ is not strongly stable, Sun and Shu~\cite[Prop.~1.1]{SunShu17} use the counter-example 
\begin{equation*} 
\mA 
=-\begin{pmatrix} 
1 & 2 & 2 \\
0 & 1 & 2 \\
0 & 0 & 1 
\end{pmatrix}
\quad\text{with }
\mAH 
=-\begin{pmatrix} 
1 & 1 & 1 \\
1 & 1 & 1 \\
1 & 1 & 1 
\end{pmatrix}.
\end{equation*}
The matrix~$\mA$ is semi-dissipative, but $\|R(\tau\mA)\| >1$ for $\tau>0$ sufficiently small.
The staircase form of $(\mJ,\mR)=(\mAS,\mAH)$ is 
\[
 \mV\mA \mV^*
=-\left[\begin{array}{ccc}
3 & \frac{2 \sqrt{3}\, \sqrt{2}}{3} & 0 
\\
 -\frac{2 \sqrt{3}\, \sqrt{2}}{3} & 0 & -\frac{\sqrt{2}\, \sqrt{6}}{6} 
\\
 0 & \frac{\sqrt{2}\, \sqrt{6}}{6} & 0 
\end{array}\right],
\quad
 \mV
=\left[\begin{array}{ccc}
\frac{\sqrt{3}}{3} & \frac{\sqrt{3}}{3} & \frac{\sqrt{3}}{3} 
\\
 -\frac{\sqrt{2}}{2} & 0 & \frac{\sqrt{2}}{2} 
\\
 -\frac{\sqrt{6}}{6} & \frac{\sqrt{6}}{3} & -\frac{\sqrt{6}}{6} 
\end{array}\right].
\]
Here, we have $r=4$, $n_1=n_2=n_3=1$, $n_4=0$, and $\mA$ has the HC-index $\mHC=r-2=2$.
Due to Theorem~\ref{thm.main}(b), matrix~$\mA$ has the minimal HC-index~$\mHC(\mA)$ to detect the failure of strong stability, such that $\mHC(\mA)>3/2$.
\end{example}


\section{Proof of Theorem \ref{thm.main}}\label{sec.main}

The proof is based on the short-time decay of the spectral norm of the matrix exponential $t\mapsto e^{t\mA}\in \Cnn$, characterized by the HC-index of $\mA$. We recall from \cite[Theorem 2.7(a)]{AAC22}:

\begin{proposition}\label{prop:ODE-short}
Let the matrix $\mA\in\Cnn$ be semi-dissipative.
Then~$\mA$ is asymptotically stable (with HC-index $\mHC\in\N_0$) if and only if
\begin{equation}\label{short-t-decay}
  \|e^{t\mA}\|_2 = 1-ct^a+\bigO(t^{a+1})\quad\text{ for } t\in[0,\epsilon), %
\end{equation}
for some $a,c,\epsilon>0$. In this case, necessarily $a=2m_{HC}+1$.
\end{proposition}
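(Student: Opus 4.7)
The plan is to analyze the Taylor expansion at $t=0$ of the scalar functions $g_v(t) := \|e^{t\mA}v\|^2 = \langle v, e^{t\mA^*} e^{t\mA} v\rangle$ for unit vectors $v \in \Cn$, and then pass to the matrix norm via $\|e^{t\mA}\|_2^2 = \max_{\|v\|=1} g_v(t)$. Semi-dissipativity gives $g_v'(t) = 2\langle e^{t\mA}v, \mAH e^{t\mA}v\rangle \le 0$, hence $g_v(t) \le 1$ for $t\ge 0$, so the first nonvanishing Taylor coefficient of $g_v(t) - 1$ must be strictly negative. Under asymptotic stability, no unit $v$ can make $g_v \equiv 1$ on a right neighborhood (else $\spn\{(\mAS^*)^j v\}_{j\ge 0}$ would be an $\mA$-invariant subspace on which $\mA$ acts skew-Hermitianly, contradicting asymptotic stability), so each $g_v - 1$ has a well-defined finite order of vanishing at $0$.

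The key algebraic step is to identify this order in terms of the structural data of $\mA$. Writing $g_v^{(k)}(0) = \langle v, B_k v\rangle$ with $B_k := \sum_{j=0}^k \binom{k}{j}(\mA^*)^{k-j}\mA^j$ and splitting $\mA = \mAH + \mAS$, one verifies inductively the claim that $g_v^{(k)}(0) = 0$ for all $k = 1,\ldots,2m$ if and only if $(\mAS^*)^j v \in \ker\mAH$ for $j = 0,\ldots,m-1$, in which case
\begin{equation*}
  g_v^{(2m+1)}(0) = \gamma_m\, \langle (\mAS^*)^m v,\ \mAH\, (\mAS^*)^m v\rangle
\end{equation*}
for an explicit $\gamma_m > 0$. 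The small cases are indicative: $g_v'(0) = 2\langle v, \mAH v\rangle$, and when $v\in\ker\mAH$ one checks $g_v''(0) = 0$ while $g_v'''(0) = 4\langle \mAS v, \mAH \mAS v\rangle$. Matching this against the definition~\eqref{mHC-ineq}, the subspace $N_m := \{v: (\mAS^*)^j v\in\ker\mAH,\ j=0,\ldots,m\}$ coincides with the kernel of the negative semidefinite matrix $T_m$. Hence $T_m < 0 \iff N_m = \{0\}$, so $\mHC(\mA)$ equals the smallest $m$ with $N_m = \{0\}$, and the largest order of vanishing of $g_v(t) - 1$ over unit $v$ is exactly $2\mHC(\mA)+1$, realized on the nontrivial subspace $N_{\mHC(\mA)-1}$ (with the convention $N_{-1} := \Cn$).

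A compactness argument on the unit sphere then converts the pointwise expansions into the uniform asymptotic
\begin{equation*}
 \|e^{t\mA}\|_2^2 = 1 - 2c\, t^{2\mHC(\mA)+1} + \bigO\!\big(t^{2\mHC(\mA)+2}\big),
\end{equation*}
where $c > 0$ is determined by the worst-case quadratic form $-\langle (\mAS^*)^{\mHC} v, \mAH (\mAS^*)^{\mHC} v\rangle$ over unit $v\in N_{\mHC(\mA)-1}$ (strictly positive by minimality of $\mHC(\mA)$); the square root yields~\eqref{short-t-decay} with $a = 2\mHC(\mA)+1$. For the converse direction, an expansion of the stated form with $c>0$ forces $\|e^{\epsilon_0\mA}\|_2 < 1$ for some small $\epsilon_0>0$, and submultiplicativity $\|e^{k\epsilon_0\mA}\|_2 \le \|e^{\epsilon_0\mA}\|_2^k$ then gives $e^{t\mA}\to 0$, i.e.\ asymptotic stability; the forward direction forces $a = 2\mHC(\mA)+1$ by uniqueness of the leading asymptotic order. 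The principal obstacle is the inductive identification of $g_v^{(k)}(0)$ with the iterated kernel condition on $(\mAS^*)^j v$: the Leibniz expansion of $B_k$ in $\mAH$ and $\mAS$ produces many cross terms, and one must repeatedly exploit $\mAH^* = \mAH \le 0$, $\mAS^* = -\mAS$, and the assumed partial kernel conditions to collapse them into the clean quadratic form $\gamma_m\langle (\mAS^*)^m v, \mAH (\mAS^*)^m v\rangle$.
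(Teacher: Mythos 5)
First, a point of comparison: this paper does not prove Proposition~\ref{prop:ODE-short} itself --- it is imported verbatim from \cite[Theorem~2.7(a)]{AAC22} --- so what you have written is a from-scratch reconstruction of the cited result. Your skeleton is correct and matches the reference in spirit: the identification $\ker(T_m)=\{v:\mAH(\mAS^*)^j v=0,\ 0\le j\le m\}$ (valid because each summand of $T_m$ is negative semi-definite), the characterization of $\mHC$ as the smallest $m$ with $N_m=\{0\}$, the invariant-subspace argument excluding $g_v\equiv 1$ under asymptotic stability (this is exactly Proposition~\ref{prop:border}), the pointwise order of vanishing $2\mHC+1$ on $N_{\mHC-1}$ (cf.\ Remark~\ref{rem:u0}), and the easy converse via submultiplicativity are all sound. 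The inductive collapse of $g_v^{(k)}(0)$ into $\gamma_m\langle(\mAS^*)^m v,\mAH(\mAS^*)^m v\rangle$, which you flag as the principal obstacle, is a real but doable computation carried out in \cite{AAC22}.

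The genuine gap is the ``compactness argument on the unit sphere.'' Your pointwise expansions, squeezed against a fixed trial vector $v_0\in N_{\mHC-1}$ and against $g_v\le 1$, only show that the (real-analytic) function $\lambda_{\max}\big(e^{t\mA^*}e^{t\mA}\big)$ equals $1+\alpha t^{2\mHC+1}+\bigO(t^{2\mHC+2})$ with $\alpha\in[-C,0]$; ruling out $\alpha=0$ is the hard half, and compactness does not deliver it. The expansions $g_v(t)=1-c_vt^{a_v}+o(t^{a_v})$ are not uniform in $v$: as $v$ approaches $N_{\mHC-1}$ from outside, $c_v\to 0$ and the crossover scale collapses, and the intermediate forms $\langle v,B_kv\rangle$ for $2\le k\le 2\mHC$ are not sign-definite, so near-maximizers cannot be controlled term by term. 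A concrete warning sign that the uncorrected vectors $v_0$ do not capture the maximum: the sharp leading constant in Lemma~\ref{lm:g} is $\|(-\mAH)^{1/2}\mA^{\mHC}u_0\|_2^2/\big((2\mHC+1)!\binom{2\mHC}{\mHC}\big)$, which differs from the coefficient of $t^{2\mHC+1}$ in $g_{u_0}(t)$ by the factor $\binom{2\mHC}{\mHC}$ --- the $t$-dependent corrections $u_\tau=u_0+\sum_\ell b_\ell\tau^\ell(-\mA)^\ell u_0$ genuinely lift the value at the leading order, so your description of $c$ as the extremal value of $-\langle(\mAS^*)^{\mHC}v,\mAH(\mAS^*)^{\mHC}v\rangle$ over unit $v\in N_{\mHC-1}$ is also not correct. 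The cited proof closes this step with analytic (Rellich--Kato) perturbation theory for the fully degenerate eigenvalue $1$ of $Q(0)=\mI$ together with the corrected trial vectors of Lemma~\ref{lm:g}; some such degenerate-perturbation input is unavoidable here and is missing from your sketch.
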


The sharp multiplicative factor~$c$ in~\eqref{short-t-decay} has been determined explicitly in~\cite[Theorem 2.7(b)]{AAC22}. 
Definition \ref{def:HCI} immediately yields the following observation.

\begin{remark} \label{rem:u0}
Let $\mA$ be semi-dissipative with HC-index $\mHC \in\N_0$. 
In view of~\eqref{mHC-ineq}, there exists a normalized vector $u_0$ such that
\begin{equation}\label{u0_prep}
 u_0 \in\ker \big(T_{\mHC-1}\big)
 =\ker \bigg(\sum_{j=0}^{\mHC-1} \mAS^j \mAH (\mAS^*)^j\bigg) ,
\end{equation}
but none that would satisfy instead also $(-\mAH)^{1/2} (\mAS)^{\mHC} u_0 =0$.
Hence, there exists a normalized vector $u_0$ such that
\begin{equation}\label{u0}
\|(-\mAH)^{1/2} (\mAS)^j u_0\|_2 =0\ \text{ for }\ 0 \leq j \leq \mHC-1,
\quad (-\mAH)^{1/2} (\mAS)^{\mHC} u_0\ne 0 . 
\end{equation}
\end{remark}


\medskip\noindent\textbf{Preparation.}
We claim that $t\mapsto \|e^{t\mA}\|_2$ and  $\tau\mapsto\|R(\tau\mA)\|_2$ are real analytic on a sufficiently small time interval. In particular, the Taylor expansions of order $p+1$ of $\|e^{t\mA}\|_2$ and $\|R(\tau\mA)\|_2$ about zero exist.
Indeed, according to \cite[Lemma 1]{Ko01},
for sufficiently small time $t_0>0$, there exists a real analytic function $\Phi: [0,t_0]\to\R$ such that $\|e^{t\mA}\|_2 =\Phi(t)$
for all $t\in[0,t_0]$. This statement can also be derived from \cite[Chap.~2, \S6]{Ka95} or \cite[Theorem 4.3.17]{HiPr05}.

Similarly, we prove that $\|R(\tau\mA)\|_2$ is (at least) locally real analytic for sufficiently small $\tau\geq 0$. 
The stability function $R(\tau\mA)$ is an analytic matrix function w.r.t.\ $\tau$, hence $G(\tau):=R(\tau\mA^*)R(\tau\mA)$ is again an analytic matrix function. 
Due to \cite[Theorem 4.3.17]{HiPr05}, there exists a neighborhood of $\tau_0=0$ and locally analytic functions $\lambda_j(\tau) =\sigma_j(G(\tau))$, where $\sigma_j$ denotes the singular values of (the self-adjoint) matrix~$G(\tau)$, and $\lambda_j(0)=1$ for $j=1,...,n$. Hence, 
\[
 \|R(\tau\mA)\|_2 = \max_{j=1,\ldots,n} 
 \Big\{ \sqrt{\lambda_j(R(\tau\mA))}\Big\} .
\]
If two real analytic functions are the same on some converging sequence then they are identical.
Since $\lambda_j(0)=1$, we use null-sequences to deduce that any two real analytic functions $\lambda_k$, $\lambda_\ell$ on $[0,\infty)$ are either identical or there exists $\tau_*^{k\ell}>0$ such that they do not intersect on a finite interval $[0,\tau_*^{k\ell})$.
Since there are only finitely many $\lambda_j$, there exists $\tau_*:=\min\{\tau_*^{k\ell}\}>0$ such that $\|R(\tau\mA)\|_2$ is real analytic on $[0,\tau_*)$, see also the discussion in~\cite[Chap.~2, \S6.4]{Ka95}.


\medskip\noindent\textbf{Proof of Theorem \ref{thm.main}, statement~(a).}
Following the proof of \cite[Theorem 2.7]{AAC22}, we consider the spectral norm $\|e^{t\mA}\|_2^2 =\|Q(t)\|_2=\lambda_{\rm max}(Q(t))$ for small $t>0$, where 
\begin{equation*}
  Q(t):=e^{t\mA^*}e^{t\mA}
  = \sum_{j=0}^{\infty} \frac{t^j}{j!}\ U_j, \quad
  U_j = (\mA+\mA^*)^j = \sum_{k=0}^j \binom{j}{k} (\mA^*)^k \mA^{j-k},
\end{equation*}
satisfying $\|U_j\|_2 \leq (2 \|\mA\|_2)^j$ for $j\in\N_0$.

First, we compute the stability function $R(\tau\mA)$ in~\eqref{1.R}, setting $c_k=0$ for $k>s$ and $c_k=1$ for $k\le p$,
\begin{align*}
 G(\tau) 
&=R(\tau\mA^*) R(\tau\mA)
=\sum_{j=0}^{2s} \frac1{j!} \sum_{k=0}^{j} \binom{j}{k} c_k c_{j-k} (\tau \mA^*)^k (\tau \mA)^{j-k}
\\
&=\sum_{j=0}^{2s} \frac{\tau^j}{j!} \sum_{k=0}^{j} \binom{j}{k} c_k c_{j-k} (\mA^*)^k \mA^{j-k}
\\
&=\sum_{j=0}^{p} \frac{\tau^j}{j!} U_j
+\sum_{j=p+1}^{2s} \frac{\tau^j}{j!} \sum_{k=0}^{j} \binom{j}{k} c_k c_{j-k} (\mA^*)^k \mA^{j-k} \\
&= \sum_{j=0}^p\frac{\tau^j}{j!} U_j
+ \frac{\tau^{p+1}}{(p+1)!}\bigg(\sum_{k=1}^p\binom{p+1}{k}(\mA^*)^k
\mA^{p+1-k} \\
&\phantom{xx}+c_{p+1}\mA^{p+1}+c_{p+1}(\mA^*)^{p+1}\bigg) + \bigO(\tau^{p+2}).
\end{align*}
We subtract
\begin{align*}
  Q(\tau) &= \sum_{j=0}^p\frac{\tau^j}{j!} U_j
  + \frac{\tau^{p+1}}{(p+1)!}\bigg(\sum_{k=1}^p\binom{p+1}{k}(\mA^*)^k
  \mA^{p+1-k} \\
  &\phantom{xX}+\mA^{p+1}+(\mA^*)^{p+1}\bigg) + \bigO(\tau^{p+2})
\end{align*}
from $G(\tau)$ and set $\widetilde{c}_k=c_k-1$ to find that
\begin{equation}\label{GminusQ}
  G(\tau)-Q(\tau) = \frac{\tau^{p+1}}{(p+1)!}\widetilde{c}_{p+1}
  \big(\mA^{p+1} +(\mA^*)^{p+1}\big) + \bigO(\tau^{p+2}).
\end{equation}
Consequently, the Taylor expansions for $G(\tau)$ and $Q(\tau)$ differ starting from the $\tau^{p+1}$-term.
Now, consider an explicit Runge--Kutta scheme of order $p\in \N$ with stability function $R(z)$, and matrices~$\mA\in\cLAS$ with HC-index $\mHC$ satisfying $2\mHC+1\leq p$.
Then, taking into account Proposition~\ref{prop:ODE-short}, there exist $c$, $\epsilon>0$ such that
\begin{align*}
  \|R(\tau\mA)\|_2 
  &= \|G(\tau)\|_2^{1/2} = \|Q(\tau)\|_2^{1/2} + \bigO(\tau^{p+1}) \\
  & =\|e^{\tau\mA}\|_2 +\bigO(\tau^{p+1})
  = 1-c\tau^{2\mHC+1}+\bigO(\tau^{2\mHC+2})
\end{align*}
for all $\tau\in[0,\epsilon)$.
We conclude that $\|R(\tau\mA)\|_2\le1$ for sufficiently small $\tau$, hence proving that all explicit Runge--Kutta schemes of order $p\in \N$ are strongly stable w.r.t.~matrices~$\mA\in\cLAS$ with HC-index $\mHC$ satisfying $2\mHC+1\leq p$.


\medskip{\noindent\textbf{Proof of Theorem \ref{thm.main}, statement~(b) for even order $p$.}
Let $p\in 2\N$ and let $\mA\in\cLAS$ with HC-index $\mHC=p/2$. 
The proof uses the following result, which follows from 
\cite[Lemma A.4]{AAC22}, when using the simple observation from \cite[Lemma 2.4]{AAC22} that $\ker(T_m)=\ker[\sum_{j=0}^m (\mA^*)^j \mAH\mA^j]$.

\begin{lemma}\label{lm:g}
Let $\mA\in\Cnn$ be semi-dissipative with HC-index $\mHC\in\N$.
Then, for each $u_0\in\ker(T_{\mHC-1})$, there exists a polynomial vector function $u_\tau\in\Cn$, $\tau\in[0,1]$, of the form 
\begin{equation} \label{u_ast}
  u_\tau =u_0 +\sum_{\ell=1}^{\mHC} b_\ell \tau^\ell (-\mA)^\ell u_0
\end{equation}
for suitable numbers $b_\ell\in\R$, such that for $\tau\in[0,1]$,
\begin{equation} \label{SolutionNorm:expansion}
 g(u_\tau;\tau) 
:= u_\tau^* \bigg( \sum_{j=1}^{\infty} \frac{\tau^j}{j!} U_j \bigg) u_\tau 
 = -2 c_1(u_0) \tau^a +\bigO(\tau^{a+1}), 
\end{equation}
where $a = 2\mHC+1$ and $c_1(u_0) := \|(-\mAH)^{1/2} \mA^\mHC u_0\|_2^2/((2\mHC+1)! \binom{2\mHC}{\mHC})$.
\end{lemma}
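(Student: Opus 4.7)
The statement is the semi-dissipative reformulation of \cite[Lemma A.4]{AAC22} (there stated for accretive matrices $\mC$) via $\mC:=-\mA$, so my plan is to reduce to that reference after an algebraic conversion of the kernel condition on $u_0$. First I would rewrite
\[
 g(u_\tau;\tau) \;=\; u_\tau^*\bigl(Q(\tau)-I\bigr)u_\tau \;=\; \|e^{\tau\mA}u_\tau\|_2^2 - \|u_\tau\|_2^2 ,
\]
so the claim becomes: find a real-coefficient polynomial $p(z)=1+\sum_{\ell=1}^{\mHC}b_\ell z^\ell$ such that, with $u_\tau = p(-\tau\mA)u_0$, the Taylor series of $\|e^{\tau\mA}u_\tau\|_2^2 - \|u_\tau\|_2^2$ vanishes through order $2\mHC$ and its $\tau^{2\mHC+1}$-coefficient equals $-2c_1(u_0)$. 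The heuristic behind the ansatz \eqref{u_ast} is that the \emph{full} series $p(z)=e^{-z}$ would force $e^{\tau\mA}u_\tau\equiv u_0$; here $p$ is a degree-$\mHC$ truncation, and the free coefficients $b_1,\ldots,b_\mHC$ are used to tune the residue.

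Next I would translate the hypothesis $u_0\in\ker T_{\mHC-1}$ into a form amenable to that Taylor expansion. Since $T_{\mHC-1}\le 0$ decomposes as a sum of negative semi-definite terms, membership in its kernel is equivalent to $(-\mAH)^{1/2}(\mAS^*)^j u_0 = 0$ for $0\le j\le \mHC-1$; by \cite[Lemma 2.4]{AAC22}, as the authors emphasise just before the lemma, this is in turn equivalent to $(-\mAH)^{1/2}\mA^j u_0 = 0$ for $0\le j\le \mHC-1$. The latter form is what directly annihilates the moments $u_0^*(\mA^*)^i\mA^k u_0$ appearing in the expansion of $Q(\tau)-I=\sum_{j\ge 1}\tau^j U_j/j!$.

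With these two ingredients, the construction proceeds exactly as in \cite[Lemma A.4]{AAC22}: expand each Taylor coefficient $g_N$ of $g(u_\tau;\tau)$ as a polynomial in $b_1,\ldots,b_\mHC$ together with a quadratic form in $u_0$ built from the above moments; the vanishing relations collapse most of the moments, and a triangular linear system in $b_1,\ldots,b_\mHC$ then determines real coefficients killing $g_N$ for all $N\le 2\mHC$. The leading coefficient $g_{2\mHC+1}$ collapses, via the same relations, onto the single surviving moment $\|(-\mAH)^{1/2}\mA^{\mHC}u_0\|_2^2$, with the combinatorial prefactor $\bigl((2\mHC+1)!\binom{2\mHC}{\mHC}\bigr)^{-1}$ coming from the multinomial expansion $U_{2\mHC+1}=\sum_{k}\binom{2\mHC+1}{k}(\mA^*)^k\mA^{2\mHC+1-k}$ — precisely yielding $-2c_1(u_0)$. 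The main obstacle is this last combinatorial bookkeeping: identifying which of the many mixed moments with indices summing to $2\mHC+1$ survive the kernel conditions, and summing them to exactly $-2c_1(u_0)$ with the stated binomial factor. Since this is the technical core of the cited lemma, I would invoke \cite[Lemma A.4]{AAC22} rather than re-derive it; analyticity of the underlying matrix functions on $[0,1]$ has already been secured by the preparation earlier in this section.
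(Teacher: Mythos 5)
Your proposal matches the paper's own treatment: the paper likewise proves Lemma~\ref{lm:g} by invoking \cite[Lemma A.4]{AAC22} after converting the hypothesis $u_0\in\ker(T_{\mHC-1})$ via the kernel identity $\ker(T_m)=\ker\big[\sum_{j=0}^m (\mA^*)^j \mAH \mA^j\big]$ from \cite[Lemma 2.4]{AAC22}, which is exactly your reduction. Your additional remarks (the reformulation $g=\|e^{\tau\mA}u_\tau\|_2^2-\|u_\tau\|_2^2$ and the heuristic behind the ansatz) are correct and only elaborate on the same route.
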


The vector function $u_\tau$ of this lemma is used as an approximation of an eigenfunction of $G(\tau)=R(\tau\mA)^*R(\tau\mA)$, whose associated eigenvalue is larger than one for sufficiently small $\tau>0$.
Since $g(u_\tau;\tau)=u_\tau^* Q(\tau)u_\tau-\|u_\tau\|_2^2$, we can write
\begin{equation} \label{cLAS:p_even:b1}
 u_\tau^* G(\tau) u_\tau
  =\|u_\tau\|_2^2 +g(u_\tau;\tau) +u_\tau^* 
  \big(G(\tau) -Q(\tau)\big)u_\tau, 
\end{equation}
where here and in the following, the function $g$ is considered on $\C^n\times[0,\infty)$. As $\mA$ is semi-dissipative, Remark \ref{rem:u0} yields the existence of some $u_0\in\ker(T_{\mHC-1})\setminus\{0\}$ satisfying \eqref{u0}.
This shows that
\begin{align}
 u_0^* \big(\mA^{p+1} +(\mA^*)^{p+1} \big) u_0
&=u_0^* \big((\mAH +\mAS)^{p+1} +(\mAH -\mAS)^{p+1} \big) u_0 \nonumber \\
&=2 (-1)^{\mHC}\ u_0^* (\mAS^*)^{\mHC} \mAH (\mAS)^{\mHC} u_0 \nonumber \\
&=-2 (-1)^{\mHC}\|(-\mAH)^{1/2} \mAS^{\mHC} u_0\|_2^2 \ne 0, \label{cLAS:p_even:b2}
\end{align}
where we used from~\eqref{u0} that $\mAH (\mAS)^j u_0 =0$ and $u_0^* (\mAS^*)^j \mAH=0$ for $0 \leq j \leq \mHC-1$.
Therefore, we infer from \eqref{GminusQ} and \eqref{u_ast} that
\begin{align*}
  u_\tau^*\big(G(\tau)-Q(\tau)\big)u_\tau &= -2(-1)^{m_{HC}}
  \frac{\tau^{2m_{HC}+1}}{(2m_{HC}+1)!}\widetilde{c}_{p+1}
  \|(-\mAH)^{1/2} \mAS^{\mHC} u_0\|_2^2 \\
  &\phantom{xX}+\bigO(\tau^{2\mHC+2}).
\end{align*}
Inserting this expression and \eqref{SolutionNorm:expansion} into \eqref{cLAS:p_even:b1} yields
\begin{align}
  u_\tau^*& G(\tau) u_\tau
  = \|u_\tau\|_2^2 +g(u_\tau;\tau) + u_\tau^* \big(G(\tau)-Q(\tau)\big) u_\tau \nonumber \\
  &= \|u_\tau\|_2^2 -\frac{2\tau^{2\mHC+1}}{(2\mHC+1)!} \Big( \tbinom{2\mHC}{\mHC}^{-1} +(-1)^{\mHC} \tc_{p+1}\Big) 
  \|(-\mAH)^{1/2} \mAS^{\mHC} u_0\|_2^2 \nonumber \\
  &\phantom{xX}+\bigO(\tau^{2\mHC+2}). \label{uGu:p_even}
\end{align}
By condition \eqref{c.even}, the expression in the round brackets is negative,
$$
   \tbinom{2\mHC}{\mHC}^{-1} +(-1)^{\mHC} \tc_{p+1} 
  = \tbinom{p}{p/2}^{-1} +(-1)^{p/2} (c_{p+1} -1) < 0,
$$
implying that $u_\tau^*G(\tau)u_\tau > \|u_\tau\|_2^2$ for all $\tau\in(0,\tau_*)$ for some $\tau_*>0$. We conclude that $\|R(\tau)\|_2>1$ on that interval. Thus, the Runge--Kutta scheme with even order $p$ is not strongly stable under condition \eqref{c.even}.


\medskip\noindent\textbf{Proof of Theorem \ref{thm.main}, statement (b) for odd order $p$.}
Let $p\in~2\N_0+1$ and let $\mA\in\cLAS$ with HC-index $\mHC=(p+1)/2$.
As before, Remark \ref{rem:u0} yields the existence of some $u_0\in\ker (T_{\mHC-1})\setminus\{0\}$ satisfying \eqref{u0} and hence, proceeding similarly as in~\eqref{cLAS:p_even:b2},
\begin{align*}
  u_0^* \big(\mA^{p+1} +(\mA^*)^{p+1} \big)u_0
  &= u_0^*\big((\mAH +\mAS)^{2\mHC} +(\mAH -\mAS)^{2\mHC}\big)u_0 \\
  &= 2 u_0^* \mAS^{2\mHC} u_0
  =2 (-1)^{\mHC} \|(\mAS)^{\mHC} u_0\|_2^2 \neq 0.
\end{align*}
Then, with the vector function $u_\tau$ as in Lemma \ref{lm:g} and taking into account \eqref{GminusQ} and $2m_{HC}=p+1$, 
\begin{align}
  u_\tau^* G(\tau) u_\tau
  &= \|u_\tau\|_2^2 +g(u_\tau;\tau) 
  + u_\tau^* \big(G(\tau) -Q(\tau)\big) u_\tau \nonumber \\
  &= \|u_\tau\|_2^2 -2c_1(u_0) \tau^{2\mHC+1} +\bigO(\tau^{2\mHC+2}) \nonumber \\
  &\phantom{xX}+ \frac{\tau^{p+1}}{(p+1)!} \tc_{p+1} u_0^*
  \big(\mA^{p+1} +(\mA^*)^{p+1} \big) u_0 +\bigO(\tau^{p+2}) \nonumber \\
  &= \|u_\tau\|_2^2 +\frac{\tau^{p+1}}{(p+1)!} \tc_{p+1} u_0^*\big(\mA^{p+1} +(\mA^*)^{p+1} \big)u_0 +\bigO(\tau^{p+2}) \nonumber \\
  &= \|u_\tau\|_2^2 +\frac{2\tau^{p+1}}{(p+1)!}(-1)^{(p+1)/2}\tc_{p+1} \|\mAS^{(p+1)/2} u_0\|_2^2 +\bigO(\tau^{p+2}). \label{uGu:p_odd}
\end{align}
Since $\|\mAS^{(p+1)/2} u_0\|_2^2\ne 0$, condition \eqref{c.odd}
implies that 
$$
  (-1)^{(p+1)/2}\tc_{p+1}=(-1)^{(p+1)/2}(c_{p+1}-1)>0
$$ 
and consequently, $u_\tau^* G(\tau) u_\tau>\|u_\tau\|_2^2$ for all $\tau\in(0,\tau_*)$ for some $\tau_*>0$. 
Thus, the Runge--Kutta scheme with odd order $p$ is not strongly stable under condition \eqref{c.odd}.

\medskip\noindent\textbf{Proof of Theorem \ref{thm.main}, statement~(c)}}.
Consider an explicit Runge--Kutta scheme of order $p\in\N$ which is not strongly stable and let \eqref{c} hold.
If a matrix~$\mA\in\cLAS$ with HC-index $\mHC=\mHC(\mA)$ satisfies $2\mHC+1> p$, then we can repeat the construction in the proof of statement~(b):
Consider $m\in\N$ such that $2m+1>p$ but $2m-1\leq p$.
Due to $\mHC\geq m$, there exists a vector $u_0\in\Cn$ such that
\begin{equation*}
 \|(-\mAH)^{1/2} \mAS^j u_0\|_2 =0 \text{ for } 0\leq j\leq m-1, \quad
 \|(-\mAH)^{1/2} \mAS^m u_0\|_2 \ne 0,
\end{equation*}
see~\eqref{u0}, but with $m$ replacing $\mHC$. 
Using a vector function $u_\tau$ for $\tau\in[0,1]$ as constructed in~\eqref{u_ast}, we can use the above equations~\eqref{uGu:p_even}, \eqref{uGu:p_odd}, again with $m$ replacing $\mHC$, to prove statement~(c) in Theorem~\ref{thm.main}. 
\hfill $\Box$


\section{Proof of Theorem \ref{thm.StrongStabilityInWeakForm}}
\label{sec:StrongStability.of.ERK:weak.form}

As pointed out in~\cite[\S3.5]{LeTa98}, an explicit Runge--Kutta method of order $p=4$ with $s=4$ stages can be made strongly stable w.r.t.~$\cLAS$ when restricting to some adapted $\mP$-norm.
Motivated by this observation, we prove Theorem~\ref{thm.StrongStabilityInWeakForm}.

First, we observe that 
for strong stability in weak form, the analogous statement as in Theorem~\ref{thm.stab} holds, i.e., an explicit Runge--Kutta method is strongly stable in weak form if and only if it is strongly stable in weak form w.r.t.~$\cLAS$ and~$\cLSHscalar$.

Second, let an asymptotically stable, semi-dissipative matrix~$\mA\in\cLAS$ be given.
A matrix~$\mA\in\Cnn$ is asymptotically stable if and only if there exists a matrix $\mP\in\PDHn$ such that  
\begin{equation} \label{strict_ineq:Lyapunov}
 \mA^* \mP +\mP\mA < 0 ,
\end{equation}  
see e.g.~\cite[Corollary 15.10.1]{Be18}, \cite[Theorem 3.17]{HadChe08}. 
Following the proof of Lemma \ref{lem.sim}, we find that 
$\hA:=\mP^{1/2}\mA\mP^{-1/2}$ is dissipative. Hence, $\hA$ has the HC-index $\mHC(\hA)=0$.
Consider the solution $(u^k)_{k\in\N_0}$ in~\eqref{1.RK} in the weighted norm~$\|\cdot\|_\mP$ with $\mP\in\PDHn$ satisfying~\eqref{strict_ineq:Lyapunov}. By Step 1 of the proof of Theorem \ref{thm.stab}, strong stability in weak form for the Runge--Kutta scheme, i.e. $\|u^1\|_\mP\leq \|u^0\|_\mP$, is equivalent to 
\begin{equation}\label{u-tilde-decay}
  \|\widehat{u}^1\|\leq \|\widehat{u}^0\|,\quad \mbox{where } \widehat{u}^1=R(\tau\hA)\widehat{u}^0.
\end{equation}
The inequality in \eqref{u-tilde-decay} indeed holds due to Theorem~\ref{thm.main}(a), since $\hA$ has the HC-index $\mHC=0$.
Thus, $\|u^1\|_\mP\leq \|u^0\|_\mP$ holds without (further) constraints on $\mA$.

Finally, consider strong stability w.r.t.~$\cLSHscalar$, the set of purely imaginary scalars $\lambda\in \I\R$. 
Introducing a ``weighted norm''~$\|\cdot\|_\mP$ for some positive constant~$\mP\in\R^+$ does not change the stability criterion $\|u^1\|_\mP\leq \|u^0\|_\mP$ on the considered explicit Runge--Kutta scheme.
This finishes the proof.
\hfill $\Box$

\begin{example}[Continuation of Example~\ref{ex:Sun.Shu}]
While the explicit Runge-Kutta scheme with $s = p = 4$ steps is \emph{not} strongly stable (due to Corollary \ref{cor:ERK_p4N}), it is strongly stable in weak form (due to Theorem \ref{thm.StrongStabilityInWeakForm}, since it is strongly stable w.r.t.~$\cLSHscalar$; see Table~\ref{table:ERK}).  
Notice that the authors of \cite{SunShu17} study this problem in the Euclidean norm, whereas we use an adapted $\mP$-norm to guarantee the decay estimate $\|u^1\|_\mP\leq \|u^0\|_\mP$.

In fact, the Jordan normal form $-\mA =(\mW^*)^{-1} \mJ \mW^*$ of $-\mA$ holds with
\[
\mJ 
=\begin{pmatrix}
1 & 1 & 0 \\
0 & 1 & 1 \\
0 & 0 & 1 
\end{pmatrix}
\quad\text{and}\quad
\mW 
=\begin{pmatrix}
1 & 0 & 0 \\
0 & 2 & 0 \\
0 & 2 & 4 
\end{pmatrix}.
\]
Moreover, a suitable matrix $\mP=\mP(\mA)$ is given as $\mP=\mW\mW^*>0$. 
Then, the explicit Runge-Kutta scheme with $s=p=4$ stages is strongly stable w.r.t.~$\{\mA\}$ and this specific $\mP$-norm. 
\end{example}


\textbf{Acknowledgement:}
The authors were supported by the Austrian Science Fund (FWF) via the FWF-funded SFB \# F65. The third author acknowledges support from the FWF, \# P33010. 
This work received funding from the European Research Council (ERC) under the European Union's Horizon 2020 research and innovation programme, ERC Advanced Grant NEUROMORPH, no.~101018153.



\end{document}